\newtheorem{thm}{Theorem}[section]
\newtheorem{prop}[thm]{Proposition}
\newtheorem{lm}[thm]{Lemma}
\newtheorem*{mT}{Main Theorem}
\newtheorem*{acknowledgements}{Acknowledgements}
\def \HQ{{Q^m}^*}
\numberwithin{equation}{section}
\newcommand{\Ad}{\mathrm{Ad}}
\newcommand{\tr}{\mathrm{tr}}
\newcommand{\End}{\mathrm{End}}
\newcommand{\pr}{\mathrm{pr}}
\newcommand{\R}{\mathbb{R}}
\newcommand{\C}{\mathbb{C}}
\begin{document}

\title[contact real hypersurfaces]{Contact Real hypersurfaces in the complex hyperbolic quadric }
\author{\textsc{Sebastian Klein and Young Jin Suh}}

\address{Fakult\"at f\"ur Wirtschaftsinformatik und Wirtschaftsmathematik \\
  \newline Seminargeb\"aude A5 \\
  \newline Universit\"at Mannheim \\
  \newline 68131 Mannheim, Germany}
\email{s.klein@math.uni-mannheim.de}
\address{Kyungpook National University \\
\newline College of Natural Sciences\\
\newline Department of Mathematics \& RIRCM\\
\newline Daegu 41566, Republic of Korea}
\email{yjsuh@knu.ac.kr}
\date{}

\begin{abstract}
We give a new proof of the classification of contact real hypersurfaces with constant mean curvature in the complex hyperbolic quadric ${Q^m}^* = SO_{m,2}^o/SO_mSO_2$, where $m\geq 3$.
We show that a contact real hypersurface $M$ in ${Q^m}^*$ for $m\geq 3$ is locally congruent to a tube of radius $r{\in}{\mathbb R}^+$ around the complex hyperbolic quadric ${Q^{m-1}}^*$,
or to a tube of radius \,$r\in\mathbb{R}^+$\, around the \,$\mathfrak A$-principal $m$-dimensional real hyperbolic space
${\mathbb R}H^m$ in ${Q^m}^* = SO_{m,2}^o/SO_mSO_2$, or to a horosphere in ${Q^{m-1}}^*$ induced by a class of $\mathfrak A$-principal geodesics in $\HQ$.
\end{abstract}

\maketitle
\thispagestyle{empty}

\footnote[0]{2010 \textit{Mathematics Subject Classification}: Primary 53C40. Secondary 53C55.\\
\textit{Key words}: Contact hypersurface, K\"{a}hler structure, complex conjugation, complex quadric\\
This work was supported by grant Proj. No. NRF-2015-R1A2A1A-01002459 from National Research Foundation of Korea.}

\section{Introduction}\label{section 1}

Following Sasaki \cite{Sasaki} and Okumura \cite{O2}, an odd-dimensional, smooth manifold $M^{2m-1}$ is called an \emph{almost contact manifold} if the structure group of its tangent bundle
can be reduced to $U_{m-1} \times 1$ (where $U_{m-1}$ refers to the natural real representation of the unitary group in $m-1$ complex variables). $M^{2m-1}$ is called a \emph{contact manifold}
if there exists a smooth 1-form $\eta$ on $M^{2m-1}$ so that $\eta \wedge d\eta^{m-1} \neq 0$; such an $\eta$ is then called a \emph{contact form} on $M^{2m-1}$.

It was shown by Sasaki \cite[Theorem~5]{Sasaki} that $M^{2m-1}$ is an almost contact manifold if and only if there exists an \emph{almost contact metric structure} $(\phi,\xi,\eta,g)$ on $M^{2m-1}$.
Here $\phi$ is an endomorphism field on $M^{2m-1}$, $\xi$ is a vector field on $M^{2m-1}$, $\eta$ is a 1-form on $M^{2m-1}$ and $g$ is a Riemannian metric on $M^{2m-1}$, and these data are related to
each other in the following way: First we have
$$ \phi^2X=-X+\eta(X)\xi\;,\quad \phi(\xi)=0\;,\quad \eta(\phi X)=0\;,\quad \eta(\xi)=1 $$
for all vector fields $X$ on $M^{2m-1}$, meaning that $(\phi,\xi,\eta)$ is a \emph{almost contact structure}, and moreover this structure is adapted to the Riemannian metric $g$ by
$$ g(\phi X,\phi Y) = g(X,Y)-\eta(X)\,\eta(Y) \quad\text{and}\quad \eta(X)=g(X,\xi) $$
for all vector fields $X,Y$ on $M^{2m-1}$. 

Clearly, if $M^{2m-1}$ has an almost contact metric structure $(\phi,\xi,\eta,g)$ so that  $\eta \wedge d\eta^{m-1} \neq 0$ holds,
then $M^{2m-1}$ is a contact manifold. Conversely, if $M^{2m-1}$ is a contact manifold, then
for any contact form $\eta$ on $M^{2m-1}$ there exists an almost contact metric structure $(\phi,\xi,\eta,g)$ with this $\eta$ by a result due to Sasaki \cite[Theorem~4]{Sasaki}.

Let us now consider a real hypersurface $M$ of a K\"ahler manifold $\bar{M}$ of complex dimension $m$. Then $M$ has real dimension $2m-1$, and the complex structure $J$ and the Riemannian
metric $g$ of $\bar{M}$ induce an almost contact metric structure $(\phi,\xi,\eta,g)$ on $M$: Let $N$ be a unit normal vector field of $M$ in $\bar{M}$, then we choose $\phi$ as the
\emph{structure tensor field} defined by letting $\phi X$ be the $M$-tangential part of $JX$ for any $X\in TM$, choose $\xi$ as the \emph{Reeb vector field} $\xi=-JN$, and choose
$\eta = g(\,\cdot\,,\xi)$. If there exists a smooth, everywhere non-zero function $\rho$ on $M$ in this setting so that
\begin{equation}
\label{eq:intro:contact1}
\mathrm{d}\eta(X,Y) = \rho \cdot g(\phi X,Y)
\end{equation}
holds for all vector field $X,Y$ on $M$, then $M$ is a contact manifold, and $\eta$ a contact form on $M$. In this case $M$ is called a \emph{contact hypersurface} of $\bar{M}$,
see also Blair \cite{BL}, Dragomir and Perrone \cite{DP}. It was noted by Okumura \cite[Equation~(2.13)]{O2} that the condition \eqref{eq:intro:contact1} is equivalent to
\begin{equation}
\label{eq:intro:contact2}
S\phi + \phi S = k\cdot \phi \;,   
\end{equation}  
where $S$ denotes the shape operator of the hypersurface $M$ in $\bar{M}$ with respect to the unit normal vector field $N$, and $k=2\rho$. If the complex dimension of $\bar{M}$ is at least $3$
in this setting, then the function $\rho$ resp.~$k$ is necessarily constant, see \cite[Proposition~2.5]{BerndtSuh}. 

Pursuant to these ideas, the contact hypersurfaces have been classified in the Hermitian symmetric spaces of rank $1$, namely in the complex projective space $\C P^m$ and its non-compact dual, the
complex hyperbolic space $\C H^m$.
Yano and Kon showed in \cite[Theorem~VI.1.5]{YK} that a connected contact hypersurface with constant mean curvature of the complex projective space $\C P^m$ with $m\geq 3$ is 
locally congruent either to a geodesic hypersphere, or to a tube over a real projective space $\R P^n$, $m=2n$, embedded in $\C P^m$ as a totally real,
totally geodesic submanifold. 
Vernon proved in \cite{V} that a complete, connected contact real hypersurface in ${\mathbb C}H^m$ with $m\geq 3$ is congruent to
a tube around a totally geodesic ${\mathbb C}H^{m-1}$ in ${\mathbb C}H^m$, a tube around a real form ${\mathbb R}H^m$ in ${\mathbb C}H^m$,
a geodesic hypersphere in ${\mathbb C}H^m$, or a horosphere in ${\mathbb C}H^m$.
Note that all the contact hypersurfaces in $\C P^m$ or $\C H^m$ are homogeneous and therefore have constant principal curvatures, in particular constant mean curvature. 

When we consider more complicated Hermitian symmetric spaces as ambient space $\bar{M}$, there can be contact hypersurfaces which do not have constant mean curvature.
The class of all contact hypersurfaces $M$ in $\bar{M}$ is very complicated, and a full classification does not appear to be feasible at the present time.
However, if one considers only contact hypersurfaces $M$ with constant mean curvature, the classification problem becomes tractable at least when $\bar{M}$ is a 
Hermitian symmetric space of rank $2$.

The series of irreducible Hermitian symmetric spaces of rank $2$ comprise
the complex quadrics $Q^m = SO_{m+2}/SO_2 SO_m$ (isomorphic
to the real 2-Grassmannians $G_2^+(\R^{m+2})$ of oriented planes in $\R^{m+2}$), the complex 2-Grassmannians $G_2(\C^{m+2})=SU_{m+2}/S(U_2 U_m)$, and their non-compact duals, the complex hyperbolic quadrics
${Q^m}^* = SO_{2,m}^o/SO_2 SO_m$ and the duals of the complex 2-Grassmannians $G_2^*(\C^{m+2}) = SU_{2,m}/S(U_2 U_m)$.

The classification of contact hypersurfaces with constant mean curvature
in the complex quadric $Q^m$ and in its non-compact dual ${Q^m}^*$ has been carried out by Berndt and the second author of the present paper in \cite{BerndtSuh}. The result
of the classification is that a contact hypersurface with constant mean curvature in $Q^m$ is congruent to an open part of a tube of radius $0<r<\tfrac{\pi}{2\sqrt{2}}$ around the
$m$-dimensional sphere $S^m$ embedded in $Q^m$ as a real form. A contact hypersurface with constant mean curvature in ${Q^m}^*$ is either congruent to an open part of a tube of radius $r>0$ around
the $m$-dimensional hyperbolic space $\R H^m$ embedded in ${Q^m}^*$, or to an open part of a tube of radius $r>0$ around ${Q^{m-1}}^*$ embedded in ${Q^m}^*$ as a complex, totally geodesic submanifold,
or to an open part of a horosphere of certain position in ${Q^m}^*$. 

In the complex 2-Grassmannians $G_2(\C^{m+2})$, the contact hypersurfaces with constant mean curvature have also been classified by the second author of the present paper in  \cite{SB}.
He shows that such a hypersurface is congruent to an open part of a tube around a totally geodesic quaternionic projective space $\mathbb{H}P^n$ in $G_2(\C^{m+2})$, where $m=2n$.
For the non-compact dual $G_2^*(\C^{m+2})$ of these Grassmannians, as far as we know there does not exist a classification of contact hypersurfaces with constant mean curvature.
However, Berndt, Lee and Suh \cite{BerndtLeeSuh} classified
contact hypersurfaces of $G_2^*(\C^{m+2})$ which satisfy another curvature condition, namely that the principal curvature function $\alpha$ corresponding to the Reeb
vector field of the hypersurface is constant. The result of the classification is that any such hypersurface of $G_2^*(\C^{m+2})$ is congruent either to an open part of a tube
around a totally geodesic quaternionic hyperbolic space $\mathbb{H}H^n$ in $G_2(\C^{m+2})$ (only if $m=2n$ is even), or to an open part of a horosphere in a certain position in $G_2^*(\C^{m+2})$.
Note that all these hypersurfaces have constant mean curvature (this follows from \cite[Proposition~4.1, Proposition~4.2(ii)]{BerndtLeeSuh}). 

In \cite{SQ},
the second author has given a different proof of the classification of contact hypersurfaces with constant mean curvature for the complex quadric in $Q^m$, using different methods based to a larger extent
on the characterization of Equation~\eqref{eq:intro:contact2}.
It is the objective of the present paper
to apply the methods of \cite{SQ} to the classification in ${Q^m}^*$, to re-obtain the classification result which was first shown in \cite{BerndtSuh} also for this symmetric space. In this way we will prove:

\begin{mT}\label{Main Theorem}
Let M be a connected orientable real hypersurface with constant
mean curvature in the complex hyperbolic quadric $\HQ$, $m{\ge}3$. Then $M$ is a contact hypersurface if and only if $M$ is congruent to an open
part of one of the following contact hypersurfaces in $\HQ$:
\par
(i) the tube of radius $r>0$ around the complex hyperbolic quadric ${Q^{m-1}}^*$
which is embedded in $\HQ$ as a totally geodesic complex hypersurface;
\par
(ii) a horosphere in $\HQ$ whose center at infinity is the equivalence class of an
$\mathfrak A$-principal geodesic in $\HQ$;
\par
(iii) the tube of radius $r>0$ around the $n$-dimensional real hyperbolic space
${\mathbb R}H^n$ which is embedded in $\HQ$ as a real space form of $\HQ$.
\end{mT}

The proof is based on the use of the ``fundamental geometric structures'' of the complex hyperbolic quadric ${Q^m}^*$, which are introduced in Section~\ref{section 2} of the present paper.
The term ``$\mathfrak{A}$-principal'' that occurs in part (ii) of the Main Theorem refers to a specific orbit of the isotropy action on the tangent space of ${Q^m}^*$ and is also explained
in detail in Section~\ref{section 2}. 
Section~\ref{section 3} contains a discussion of the tubes around totally geodesic
submanifolds of ${Q^m}^*$ that are involved in parts (i) and (iii) of the Main Theorem. In Section~\ref{section 4}, some general results and formulas concerning real hypersurfaces
of ${Q^m}^*$ are derived, which are then used in Section~\ref{section 5} to complete our proof of the Main Theorem.

\medskip

\section{The complex hyperbolic quadric}\label{section 2}

The \,$m$-dimensional complex hyperbolic quadric ${Q^m}^*$ is the non-compact dual of the $m$-dimensional complex quadric $Q^m$, i.e.~the simply connected Riemannian symmetric
space whose curvature tensor is the negative of the curvature tensor of $Q^m$.

The complex hyperbolic quadric ${Q^m}^*$ cannot be realized as a homogeneous complex hypersurface of the complex hyperbolic space ${\mathbb C}H^{m+1}$. In fact, Smyth \cite[Theorem~3(ii)]{BS2} has shown
that every homogeneous complex hypersurface in ${\mathbb C}H^{m+1}$ is totally geodesic. This is in marked contrast to the situation for the complex quadric $Q^m$, which can be realized
as a homogeneous complex hypersurface of the complex projective space ${\mathbb C}P^{m+1}$ in such a way that the shape operator for any unit normal vector to $Q^m$ is a real structure on the corresponding
tangent space of $Q^m$, see \cite{R} and \cite{K}.
Another related result by Smyth, \cite[Theorem~1]{BS2}, which states that any complex hypersurface ${\mathbb C}H^{m+1}$ for which
the square of the shape operator has constant eigenvalues (counted with multiplicity) is totally geodesic, also precludes the possibility of a model of ${Q^m}^*$ as a complex
hypersurface of ${\mathbb C}H^{m+1}$ with the analogous property for the shape operator.

Therefore we realize the complex hyperbolic quadric ${Q^m}^*$ as the quotient manifold $SO_{2,m}/SO_2 SO_m$. As ${Q^1}^*$ is isomorphic to the real hyperbolic space
$\mathbb{R}H^2 = SO_{1,2}/SO_2$, and ${Q^2}^*$ is isomorphic to the Hermitian product of complex hyperbolic spaces $\mathbb{C}H^1 \times \mathbb{C}H^1$, we suppose $m\geq 3$
in the sequel and throughout this paper. Let $G:= SO_{2,m}$ be the transvection group of ${Q^m}^*$ and $K := SO_2 SO_m$ be the isotropy group of ${Q^m}^*$ at the ``origin''
$p_0 := eK \in {Q^m}^*$. Then
$$ \sigma: G \to G,\; g \mapsto sgs^{-1} \quad\text{with}\quad s := \left( \begin{smallmatrix} -1 & & & & & \\ & -1 & & & & \\ & & 1 & & & \\ & & & 1 & &  \\ & & & & \ddots & \\ & & & & & 1 \end{smallmatrix} \right) $$
is an involutive Lie group automorphism of $G$ with $\mathrm{Fix}(\sigma)_0 = K$, and therefore ${Q^m}^* = G/K$ is a Riemannian symmetric space. The center of the isotropy group $K$ is
isomorphic to $SO_2$, and therefore ${Q^m}^*$ is in fact a Hermitian symmetric space.

The Lie algebra $\mathfrak{g} := \mathfrak{so}_{2,m}$ of $G$ is given by
$$ \mathfrak{g} = \bigr\{ X \in \mathfrak{gl}(m+2,\mathbb{R}) \bigr| X^t \cdot s = -s \cdot X \bigr\} $$
(see \cite[p.~59]{Kna}). In the sequel we will write members of $\mathfrak{g}$ as block matrices with respect to the decomposition $\mathbb{R}^{m+2}=\mathbb{R}^2 \oplus \mathbb{R}^m$, i.e.~in the form
$$ X = \left( \begin{smallmatrix} X_{11} & X_{12} \\ X_{21} & X_{22} \end{smallmatrix} \right) \;, $$
where $X_{11}$, $X_{12}$, $X_{21}$, $X_{22}$ are real matrices of the dimension $2\times 2$, $2\times m$, $m\times 2$ and $m\times m$, respectively. Then
$$ \mathfrak{g} = \left\{ \; \left. \left( \begin{smallmatrix} X_{11} & X_{12} \\ X_{21} & X_{22} \end{smallmatrix} \right) \;\right|\; X_{11}^t=-X_{11}, \; X_{12}^t = X_{21},\; X_{22}^t = -X_{22} \;\right\} \; . $$
The linearisation \,$\sigma_L=\mathrm{Ad}(s): \mathfrak{g}\to\mathfrak{g}$\, of the involutive Lie group automorphism \,$\sigma$\, induces the Cartan decomposition $\mathfrak{g} = \mathfrak{k}
\oplus \mathfrak{m}$, where the Lie subalgebra
$$  \mathfrak{k} = \mathrm{Eig}(\sigma_*,1) = \{ X \in \mathfrak{g} | sXs^{-1}=X\} \\
  = \left\{ \; \left. \left( \begin{smallmatrix} X_{11} & 0 \\ 0 & X_{22} \end{smallmatrix} \right) \;\right|\; X_{11}^t=-X_{11}, \; X_{22}^t = -X_{22} \;\right\} \cong \mathfrak{so}_2 \oplus \mathfrak{so}_m$$
is the Lie algebra of the isotropy group $K$, and the $2m$-dimensional linear subspace
$$  \mathfrak{m} = \mathrm{Eig}(\sigma_*,-1) = \{ X \in \mathfrak{g} | sXs^{-1}=-X\} 
  = \left\{ \; \left. \left( \begin{smallmatrix} 0 & X_{12} \\ X_{21} & 0 \end{smallmatrix} \right) \;\right|\; X_{12}^t = X_{21} \;\right\} $$
is canonically isomorphic to the tangent space $T_{p_0}{Q^m}^*$. Under the identification $T_{p_0}{Q^m}^* \cong \mathfrak{m}$, the Riemannian metric $g$ of ${Q^m}^*$ (where the constant factor
of the metric is chosen so that the formulae become as simple as possible) is given by
$$ g(X,Y) = \tfrac12\,\tr(Y^t \cdot X) = \tr(Y_{12}\cdot X_{21}) \quad\text{for}\quad X,Y \in \mathfrak{m} \; . $$
$g$ is clearly $\Ad(K)$-invariant, and therefore corresponds to an $\Ad(G)$-invariant Riemannian metric on ${Q^m}^*$.
The complex structure $J$ of the Hermitian symmetric space is given by
$$ JX = \Ad(j)X \quad\text{for}\quad X \in \mathfrak{m}, \quad \text{where}\quad j := \left( \begin{smallmatrix} 0 & 1 & & & & \\ -1 & 0 & & & & \\ & & 1 & & & \\ & & & 1 & & \\ & & & & \ddots & \\ & & & & & 1
\end{smallmatrix} \right) \in K \; . $$
Because $j$ is in the center of $K$, the orthogonal linear map $J$ is $\Ad(K)$-invariant, and thus defines an $\Ad(G)$-invariant Hermitian structure on ${Q^m}^*$. By identifying the
multiplication with the unit complex number $i$ with the application of the linear map $J$, the tangent spaces of ${Q^m}^*$ thus become $m$-dimensional complex linear spaces,
and we will adopt this point of view in the sequel. 

Like for the complex quadric (again compare \cite{R} and \cite{K}), there is another important structure on the tangent bundle of the complex quadric besides the Riemannian metric and the complex structure,
namely an $S^1$-bundle $\mathfrak{A}$ of real structures (conjugations). The situation here differs from that of the complex quadric in that for ${Q^m}^*$, the real structures in $\mathfrak{A}$ cannot be
interpreted as the shape operator of a complex hypersurface in a complex space form, but as the following considerations will show, $\mathfrak{A}$ still plays a fundamental role in the description
of the geometry of ${Q^m}^*$. 

Let
$$ a_0 := \left( \begin{smallmatrix} 1 & & & & & \\ & -1 & & & & \\ & & 1 & & & \\ & & & 1 & &  \\ & & & & \ddots & \\ & & & & & 1 \end{smallmatrix} \right) \; . $$
Note that we have $a_0 \not\in K$, but only $a_0 \in O_2\,SO_m$. However, $\Ad(a_0)$ still leaves $\mathfrak{m}$ invariant, and therefore defines an $\mathbb{R}$-linear map $A_0$ on the tangent space
$\mathfrak{m} \cong T_{p_0}{Q^m}^*$. $A_0$ turns out to be an involutive orthogonal map with $A_0 \circ J = -J \circ A_0$\, (i.e.~$A_0$ is anti-linear with respect to the complex structure of
$T_{p_0}{Q^m}^*$), and hence a real structure on $T_{p_0}{Q^m}^*$. But $A_0$ commutes with $\Ad(g)$ not for all $g \in K$, but only for $g \in SO_m \subset K$. More specifically, for $g=(g_1,g_2) \in K$ with
$g_1 \in SO_2$ and $g_2 \in SO_m$, say $g_1 = \left( \begin{smallmatrix} \cos(t) & -\sin(t) \\ \sin(t) & \cos(t) \end{smallmatrix} \right)$ with $t \in \R$ (so that $\Ad(g_1)$ 
corresponds to multiplication with the complex number $\mu := e^{it}$), we have
$$ A_0 \circ \Ad(g) = \mu^{-2} \cdot \Ad(g) \circ A_0 \; . $$
This equation shows that the object which is \,$\Ad(K)$-invariant and therefore geometrically relevant is not the real structure $A_0$ by itself, but rather the ``circle of real structures''
$$ \mathfrak{A}_{p_0} := \{ \lambda\,A_0 | \lambda \in S^1 \} \; . $$
$\mathfrak{A}_{p_0}$ is $\Ad(K)$-invariant, and therefore generates an $\Ad(G)$-invariant $S^1$-subbundle $\mathfrak{A}$ of the endomorphism bundle $\mathrm{End}(T{Q^m}^*)$, consisting
of real structures (conjugations) on the tangent spaces of ${Q^m}^*$. For any $A \in \mathfrak{A}$, the tangent line to the fibre of $\mathfrak{A}$ through $A$ is spanned by $JA$. 

For any $p\in {Q^m}^*$ and $A \in \mathfrak{A}_p$, the real structure $A$ induces a splitting 
$$ T_p{Q^m}^* = V(A) \oplus JV(A) $$
into two orthogonal, maximal totally real subspaces of the tangent space $T_p{Q^m}^*$. Here $V(A)$ resp.~$JV(A)$ are the $(+1)$-eigenspace resp.~the $(-1)$-eigenspace of $A$. For every
unit vector $Z \in T_p{Q^m}^*$ there exist $t\in [0,\tfrac\pi4]$, $A \in \mathfrak{A}_p$ and orthonormal vectors $X,Y \in V(A)$ so that
$$ Z = \cos(t)\cdot X + \sin(t)\cdot JY $$
holds; see \cite[Proposition~3]{R}. Here $t$ is uniquely determined by $Z$. The vector $Z$ is singular, i.e.~contained in more than one Cartan subalgebra of $\mathfrak{m}$, if and only if
either $t=0$ or $t=\tfrac\pi4$ holds. The vectors with $t=0$ are called \emph{$\mathfrak{A}$-principal}, whereas the vectors with $t=\tfrac\pi4$ are called \emph{$\mathfrak{A}$-isotropic}.
If $Z$ is regular, i.e.~$0<t<\tfrac\pi4$ holds, then also $A$ and $X,Y$ are uniquely determined by $Z$. 

Like for the complex quadric, the Riemannian curvature tensor $R$ of ${Q^m}^*$ can be fully described in terms of the ``fundamental geometric structures'' $g$, $J$ and $\mathfrak{A}$.
In fact, under the correspondence $T_{p_0}{Q^m}^* \cong \mathfrak{m}$, the curvature $R(X,Y)Z$ corresponds to $-[[X,Y],Z]$ for $X,Y,Z \in \mathfrak{m}$, see \cite[Chapter~XI, Theorem~3.2(1)]{KO}. 
By evaluating the latter expression
explicitly, one can show that one has
\begin{eqnarray*}
R(X,Y)Z & = &-g(Y,Z)X + g(X,Z)Y \\
& & - \, g(JY,Z)JX + g(JX,Z)JY + 2g(JX,Y)JZ \\
 & & -\, g(AY,Z)AX + g(AX,Z)AY \\
& & - \,g(JAY,Z)JAX + g(JAX,Z)JAY
\end{eqnarray*}
for arbitrary $A \in \mathfrak{A}_{p_0}$. Therefore the curvature of ${Q^m}^*$ is the negative of that of the complex quadric $Q^m$, compare \cite[Theorem~1]{R}. This confirms that the
symmetric space ${Q^m}^*$ which we have constructed here is indeed the non-compact dual of the complex quadric.

As Nomizu \cite[Theorem~15.3]{Nomizu} has shown, there exists one and only one torsion-free covariant derivative $\bar{\nabla}$ on ${Q^m}^*$ so that the symmetric involutions $s_p: {Q^m}^* \to {Q^m}^*$ at $p\in {Q^m}^*$ are all affine. $\bar{\nabla}$ is the \emph{canonical covariant derivative} of ${Q^m}^*$. With respect to $\bar{\nabla}$, the action of any member of $G$ on ${Q^m}^*$ is also affine.
Moreover, $\bar{\nabla}$ is the Levi-Civita connection corresponding to the Riemannian metric $g$, and therefore $g$ is parallel with respect to $\bar{\nabla}$. Moreover,
it is well-known that ${Q^m}^*$ becomes a K\"ahler manifold in this way, i.e.~the complex structure $J$ is also parallel. Finally, because the $S^1$-subbundle $\mathfrak{A}$ of the
endomorphism bundle $\End(T{Q^m}^*)$ is $\Ad(G)$-invariant, it is also
parallel with respect to the covariant derivative $\bar{\nabla}^{\End}$ induced by $\bar{\nabla}$ on $\End(T{Q^m}^*)$. Because the tangent line of the fiber of $\mathfrak{A}$ through some
$A_p \in \mathfrak{A}$\, is spanned by $JA_p$, this means precisely that
for any section $A$ of $\mathfrak{A}$ there exists a real-valued 1-form $q: T{Q^m}^* \to \R$ so that
$$ \bar{\nabla}^{\End}_v A = q(v)\cdot JA_p \quad \text{holds for $p\in {Q^m}^*$, $v \in T_p{Q^m}^*$.} $$

\par
\vskip 8pt

\section {The totally geodesic submanifolds ${Q^{m-1}}^* \subset {Q^m}^*$ and ${\mathbb{R}}H^m \subset {Q^m}^*$}\label{section 3}

The obvious embedding of Lie groups $SO_{2,m-1}^o \to SO_{2,m}^o$ induces a totally geodesic embedding of ${Q^{m-1}}^*=SO_{2,m-1}/SO_2SO_{m-1}$ into ${Q^m}^*=SO_{2,m}/SO_2SO_m$.
We will view ${Q^{m-1}}^*$ as a totally geodesic complex hypersurface
of ${Q^m}^*$ by means of this embedding.

From the construction of the $S^1$-subbundle ${\mathfrak A}$ of $\End(T{Q^m}^*)$ of real structures,
it is clear that both the tangent space $T_p{Q^{m-1}}^*$,
and the normal space $\nu_p {Q^{m-1}}^*$ of ${Q^{m-1}}^*$ in ${Q^m}^*$ are $A$-invariant for every \,$p\in {Q^{m-1}}^*$ and every real structure $A \in {\mathfrak A}_p$.
Because $\nu_p {Q^{m-1}}^*$ is complex-1-dimensional, there exists for any unit normal vector $N \in \nu_p {Q^{m-1}}^*$ some $A \in \mathfrak{A}_p$ with $N \in V(A)$. We then have
\[ T_p{Q^{m-1}}^* = (V(A) \ominus {\mathbb R}N) \oplus J(V(A) \ominus {\mathbb R}N). \]

We are now going to calculate the principal curvatures and principal curvature spaces of the tube with radius $r$ around ${Q^{m-1}}^*$ in $\HQ$.  The normal Jacobi operator $R_N$ leaves the tangent space $T_p{Q^{m-1}}^*$ and the normal space $\nu_p{Q^{m-1}}^*$ invariant. When restricted to $T_p{Q^{m-1}}^*$, the eigenvalues of $R_N$ are $0$ and $-2$ with corresponding eigenspaces $J(V(A) \ominus {\mathbb R}N)$ and $V(A) \ominus {\mathbb R}N$, respectively. The corresponding principal curvatures on the tube of radius $r$ are $0$ and $-\sqrt{2}\tanh(\sqrt{2}r)$, and the corresponding principal curvature spaces are the parallel translates of $J(V(A) \ominus {\mathbb R}N)$ and $V(A) \ominus {\mathbb R}N$ along the geodesic $\gamma$ in ${Q^m}^*$ with $\gamma(0) = p$ and $\dot{\gamma}(0) = N$ from $\gamma(0)$ to $\gamma(r)$. When restricted to $\nu_p {Q^{m-1}}^* \ominus {\mathbb R}N$, the only eigenvalue of $R_N$ is $-2$ with corresponding eigenspace ${\mathbb R}JN$. The corresponding principal curvature on the tube of radius $r$ is $-\sqrt{2}\coth(\sqrt{2}r)$, and the corresponding principal curvature space is the parallel translate of ${\mathbb R}JN$ along $\gamma$ from $\gamma(0)$ to $\gamma(r)$. For all $r>0$\, this process leads to real hypersurfaces in ${Q^m}^*$. Therefore the quadric ${Q^{m-1}}^*$ is the only singular orbit of the cohomogeneity one action of $SO_{m-1,2}^o \subset SO_{m,2}^o$ on ${Q^m}^*$. The principal orbits are the tubes around this singular orbit.
It follows that the tube of radius $r$ around ${Q^{m-1}}^*$ has three distinct constant principal curvatures $0$, $-\sqrt{2}\tanh(\sqrt{2}r)$, $-\sqrt{2}\coth(\sqrt{2}r)$.

\begin{prop}\label{proposition 3.1}
Let $M$ be the tube of radius $r>0$ around the totally geodesic ${Q^{m-1}}^*$ in $\HQ$. Then the following statements hold:
\begin{itemize}
\item[1.] $M$ is a Hopf hypersurface.
\item[2.] Every unit normal vector $N$ of $M$ is ${\mathfrak A}$-principal and therefore there exists $A \in {\mathfrak A}$ such that $AN = N$.
\item[3.] If we choose the sign of $N$ such that $N$ points inwards (i.e.~towards the focal surface ${Q^{m-1}}^*$), the principal curvatures and corresponding principal curvature spaces of $M$ are
\begin{center}
\begin{tabular}{|l|l|l|}
\hline
\mbox{principal curvature} & \mbox{eigenspace}  & \mbox{multiplicity}\\
\hline
${\lambda}=0$ & $J(V(A) \ominus {\mathbb R}N)$ & $m-1$ \\
${\mu}=\sqrt{2}\tanh(\sqrt{2}r)$ & $V(A) \ominus {\mathbb R}N$ & $m-1$\\
${\alpha}=\sqrt{2}\coth(\sqrt{2}r)$ & ${\mathbb R}JN$ & $1$ \\
\hline
\end{tabular}
\end{center}
\item[4.] The shape operator $S$ and the structure tensor field $\phi$ ($\phi = \pr_{TM} \circ J$, where $\pr_{TM}: T{Q^m}^* \to TM$ denotes the orthogonal projection) satisfy
\[S\phi + \phi S = \sqrt{2}\tanh(\sqrt{2}r)\phi.\]
\end{itemize}
\end{prop}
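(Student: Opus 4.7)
The proof splits into three pieces: statements (1)--(3) are essentially bookkeeping on the Jacobi-field computation already carried out in the paragraph preceding the proposition, while (4) is the genuinely new contact-type identity. My plan is first to establish (2) --- that the unit normal of $M$ is $\mathfrak A$-principal at every point --- then to read off (3) from the normal Jacobi operator together with standard tube formulas, so that (1) follows as an immediate corollary, and finally to verify (4) by a short case analysis on the three principal curvature spaces.

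For (2), since ${Q^{m-1}}^*$ is totally geodesic with $\mathfrak A$-invariant tangent and normal spaces, and since $\nu_p{Q^{m-1}}^*$ is complex one-dimensional, the restriction of any $A\in\mathfrak A_p$ to $\nu_p{Q^{m-1}}^*$ is a real structure on a two-dimensional real space, hence has a one-dimensional $(+1)$-eigenspace. One can therefore pick $A\in\mathfrak A_p$ with $AN=N$, so that the initial inward unit normal is $\mathfrak A$-principal. Because $\mathfrak A$ is parallel as an $S^1$-subbundle of $\End(T{Q^m}^*)$, parallel transport of $A$ along $\gamma$ yields a section $\tilde A$ of $\mathfrak A$ along $\gamma$ with $\bar\nabla^{\End}_{\dot\gamma}\tilde A=0$; since $\dot\gamma$ is also parallel and $\tilde A(0)\dot\gamma(0)=\dot\gamma(0)$, the relation $\tilde A(t)\dot\gamma(t)=\dot\gamma(t)$ persists, giving the $\mathfrak A$-principality of the unit normal of $M$ at $\gamma(r)$.

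Statement (3) then follows from standard Jacobi-field theory for tubes around totally geodesic submanifolds, combined with the diagonalization of the normal Jacobi operator $R_N$ recalled just before the proposition: eigenvalue $0$ on $J(V(A)\ominus\mathbb R N)$ and eigenvalue $-2$ on $(V(A)\ominus\mathbb R N)\oplus\mathbb R JN$. Jacobi fields arising from tangential variations along ${Q^{m-1}}^*$ carry Neumann-type initial data $Y(0)\neq 0,\;Y'(0)=0$, producing principal curvature $\sqrt 2\tanh(\sqrt 2 r)$ on the parallel translate of $V(A)\ominus\mathbb R N$ and $0$ on the parallel translate of $J(V(A)\ominus\mathbb R N)$; Jacobi fields arising from varying the normal direction within $\nu_p{Q^{m-1}}^*\ominus\mathbb R N$ carry Dirichlet-type data $Y(0)=0,\;Y'(0)\neq 0$, producing $\sqrt 2\coth(\sqrt 2 r)$ on the parallel translate of $\mathbb R JN$; the signs in the table correspond to orienting $N$ inward. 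Statement (1) now drops out: $\xi=-JN$ lies in the parallel translate of $\mathbb R JN$, so $\xi$ is an $\alpha$-eigenvector of $S$.

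For (4), write $A'$ for the real structure at $\gamma(r)$ produced by the propagation above (so $A'N'=N'$ with $N'$ the unit normal of $M$ at $\gamma(r)$), and set $\mu:=\sqrt 2\tanh(\sqrt 2 r)$. The structural observation is that $J$ exchanges the $\mu$-eigenspace $V(A')\ominus\mathbb R N'$ with the $\lambda$-eigenspace $J(V(A')\ominus\mathbb R N')$, and that both subspaces sit entirely inside $TM$, so $\phi$ agrees with $J$ on their direct sum; whereas for $X=JN'$ in the $\alpha$-eigenspace one has $JX=-N'$, which lies in the normal bundle, so $\phi X=0$. Checking each eigenspace in turn: on the $\mu$-eigenspace, $\phi X$ lies in the $\lambda$-eigenspace, so $S\phi X=0$ while $\phi SX=\mu\phi X$; on the $\lambda$-eigenspace, $SX=0$ while $\phi X$ lies in the $\mu$-eigenspace, so $S\phi X=\mu\phi X$ and $\phi SX=0$; on the $\alpha$-eigenspace, $\phi X=0$ forces both terms to vanish. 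In every case $(S\phi+\phi S)X=\mu\phi X$, which is (4). The only genuinely subtle point in the whole argument is the propagation of $\mathfrak A$-principality along $\gamma$ in (2), since $\mathfrak A$ is parallel only as an $S^1$-bundle rather than pointwise; once this is handled, everything else reduces to the eigenspace linear algebra just sketched.
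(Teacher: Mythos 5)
Your proposal is correct and follows essentially the same route as the paper, which states Proposition~\ref{proposition 3.1} as a direct consequence of the normal Jacobi operator computation in the paragraph preceding it (eigenvalues $0$ and $-2$ of $R_N$, standard Jacobi-field/tube formulas, inward orientation of $N$). The extra details you supply --- propagating $\mathfrak A$-principality along $\gamma$ via a parallel section of the $S^1$-bundle $\mathfrak A$, and the eigenspace-by-eigenspace check of $S\phi+\phi S=\sqrt2\tanh(\sqrt2 r)\phi$ --- are exactly the steps the paper leaves implicit, and they are carried out correctly.
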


A similar discussion applies to the totally geodesic $\R H^m$ in ${Q^m}^*$. The obvious embedding of Lie groups $SO_{1,m} \to SO_{2,m}$ induces a totally geodesic embedding of
$\mathbb{R} H^m = SO_{1,m}/SO_m$ into ${Q^m}^* = SO_{2,m}/SO_2SO_m$. We will view $\R H^m$ as a real form, i.e.~a totally geodesic, totally real, real-$m$-dimensional submanifold of ${Q^m}^*$
by means of this embedding. The totally geodesic submanifold $\R H^m$ of ${Q^m}^*$ is $\mathfrak{A}$-principal, i.e.~for every \,$p\in \R H^m$ there exists $A\in\mathfrak{A}_p$ so that
$T_p\R H^m = V(A)$ holds. Then the normal space of $\R H^m$ in ${Q^m}^*$ at $p$ is given by $\nu_p \R H^m = JV(A)$.

We will again calculate the principal curvatures and principal curvature spaces of the tube with radius $r>0$ around $\mathbb{R}H^m$ in $\HQ$ with respect to the unit normal vector $N \in JV(A)$.
The normal Jacobi operator $R_N$ leaves the tangent space $V(A)$ and the normal space $JV(A)$ invariant. When restricted to $T_p\mathbb{R}H^m$, the eigenvalues of $R_N$ are
$0$ and $-2$, and the corresponding eigenspaces are $V(A) \ominus \mathbb{R} JN$ and $\mathbb{R} JN$, respectively. The corresponding principal curvatures on the tube of radius $r$ are
$0$ and $-\sqrt{2}\,\tanh(\sqrt{2}r)$, and the corresponding principal curvature spaces are the parallel translates of $V(A) \ominus \mathbb{R} JN$ and $\mathbb{R} JN$ along the geodesic $\gamma$
in ${Q^m}^*$ with $\gamma(0) = p$ and $\dot{\gamma}(0) = N$ from $\gamma(0)$ to $\gamma(r)$. When restricted to $\nu_p \mathbb{R}H^m \ominus {\mathbb R}N$, the only eigenvalue of $R_N$ is
$-2$ with corresponding eigenspace $JV(A) \ominus \mathbb{R}N$. The corresponding principal curvature on the tube of radius $r$ is $-\sqrt{2}\,\coth(\sqrt{2}r)$, and
the corresponding principal curvature space is the parallel translate of $JV(A) \ominus \mathbb{R}N$ along $\gamma$ from $\gamma(0)$ to $\gamma(r)$. For all $r>0$ this process leads to
real hypersurfaces in ${Q^m}^*$. Therefore $\mathbb{R}H^m$ is the only singular orbit of the cohomogeneity one action of $SO_{m,1}^o \subset SO_{m,2}^o$ on ${Q^m}^*$.
It follows that the tube of radius $r$ around $\mathbb{R}H^m$ has three distinct constant principal curvatures $0$, $-\sqrt{2}\tanh(\sqrt{2}r)$, $-\sqrt{2}\coth(\sqrt{2}r)$.

\begin{prop}\label{proposition 3.2}
Let $M$ be the tube of radius $r>0$ around the totally geodesic $\mathbb{R}H^m$ in $\HQ$. Then the following statements hold:
\begin{itemize}
\item[1.] $M$ is a Hopf hypersurface.
\item[2.] Every unit normal vector $N$ of $M$ is contained in $JV(A)=V(-A)$, i.e.~we have $AN=-N$.
\item[3.] If we choose the sign of $N$ such that $N$ points inwards (i.e.~towards the focal surface $\mathbb{R}H^m$), the principal curvatures and corresponding principal curvature spaces of $M$ are
\begin{center}
\begin{tabular}{|l|l|l|}
\hline
\mbox{principal curvature} & \mbox{eigenspace}  & \mbox{multiplicity}\\
\hline
${\lambda}=0$ & $J(V(-A) \ominus \mathbb{R}N)$ & $m-1$ \\
${\mu}=\sqrt{2}\coth(\sqrt{2}r)$ & $V(-A) \ominus {\mathbb R}N$ & $m-1$\\
${\alpha}=\sqrt{2}\tanh(\sqrt{2}r)$ & ${\mathbb R}JN$ & $1$ \\
\hline
\end{tabular}
\end{center}
\item[4.] The shape operator $S$ and the structure tensor field $\phi$ ($\phi = \pr_{TM} \circ J$, where $\pr_{TM}: T{Q^m}^* \to TM$ denotes the orthogonal projection) satisfy
\[S\phi + \phi S = \sqrt{2}\coth(\sqrt{2}r)\phi.\]
\end{itemize}
\end{prop}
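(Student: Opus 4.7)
\emph{Setup and verification of (2).} The proof follows the same template as the computation preceding Proposition~\ref{proposition 3.1}, so first I would fix the geometric picture. Let $p\in M$ and let $\tilde\gamma\colon [0,r]\to\HQ$ be the minimizing unit-speed geodesic from the foot $q:=\tilde\gamma(0)\in\mathbb{R}H^m$ to $p=\tilde\gamma(r)$, and take $N:=-\dot{\tilde\gamma}(r)$ as the inward unit normal at $p$. The $\mathfrak{A}$-principality of $\mathbb{R}H^m$, recalled just before the statement, furnishes $A_q\in\mathfrak{A}_q$ with $T_q\mathbb{R}H^m=V(A_q)$ and $\nu_q\mathbb{R}H^m=V(-A_q)$; in particular $\dot{\tilde\gamma}(0)\in V(-A_q)$. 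Since the subbundle $\mathfrak{A}$ is parallel (Section~\ref{section 2}), parallel transport along $\tilde\gamma$ sends $A_q$ to some $A\in\mathfrak{A}_p$; because transport commutes with $J$ and is an isometry, it maps $V(\pm A_q)$ onto $V(\pm A)$, and consequently $N\in V(-A)$ and $JN\in V(A)$. This is~(2).

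\emph{The normal Jacobi operator.} Next I would diagonalize $R_N=R(\,\cdot\,,N)N$ by substituting into the explicit curvature formula for $\HQ$ recorded in Section~\ref{section 2}, using only $V(A)\perp V(-A)$, $AN=-N$, $A(JN)=JN$, and $g(JX,N)=-g(X,JN)$. A direct calculation yields
\begin{align*}
R(X,N)N &= 0 \quad\text{for } X\in V(A)\ominus\mathbb{R}JN,\\
R(JN,N)N &= -2\,JN,\\
R(Y,N)N &= -2\,Y \quad\text{for } Y\in V(-A)\ominus\mathbb{R}N,
\end{align*}
so $T_pM=(V(A)\ominus\mathbb{R}JN)\oplus\mathbb{R}JN\oplus(V(-A)\ominus\mathbb{R}N)$ is an $R_N$-eigendecomposition with eigenvalues $0$, $-2$, $-2$ respectively.

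\emph{Principal curvatures and items (1), (3), (4).} Now solve the Jacobi equation $J''+R_NJ=0$ along $\tilde\gamma$. Vectors in $T_q\mathbb{R}H^m=V(A)$ produce Jacobi fields with $J(0)\neq 0$, $J'(0)=0$: the eigenvalue $0$ on $V(A)\ominus\mathbb{R}J\dot{\tilde\gamma}(0)$ gives a constant field and principal curvature $0$; the eigenvalue $-2$ on $\mathbb{R}J\dot{\tilde\gamma}(0)$ gives $J(t)=\cosh(\sqrt{2}\,t)$ and, after reverting to the inward normal, principal curvature $\sqrt 2\tanh(\sqrt 2\,r)$. Vectors in $\nu_q\mathbb{R}H^m\ominus\mathbb{R}\dot{\tilde\gamma}(0)$ produce Jacobi fields with $J(0)=0$, $J'(0)\neq 0$, and the eigenvalue $-2$ gives $J(t)=\sinh(\sqrt 2\,t)/\sqrt 2$ and principal curvature $\sqrt 2\coth(\sqrt 2\,r)$. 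Parallel-translating the eigenspaces back to $p$ via $\tilde\gamma$ produces exactly the three rows of (3). Since $\xi=-JN$ lies in the $\alpha$-eigenspace $\mathbb{R}JN$, it is principal for $S$, which is~(1). For (4), observe that $\phi=\pr_{TM}\circ J$ maps the $\mu$-eigenspace $V(-A)\ominus\mathbb{R}N$ isomorphically onto the $\lambda$-eigenspace $V(A)\ominus\mathbb{R}JN=J(V(-A)\ominus\mathbb{R}N)$ and conversely, while $\phi\xi=0$; hence $(S\phi+\phi S)X=(\lambda+\mu)\phi X=\mu\,\phi X$ on those summands and both sides vanish on $\xi$, yielding $S\phi+\phi S=\sqrt 2\coth(\sqrt 2\,r)\,\phi$.

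\emph{Main obstacle.} The substantive issue is bookkeeping the hyperbolic functions and signs: matching the eigenvalue of $R_N$ on each summand to the correct Jacobi solution, and tracking the sign flip induced by the inward-normal convention. The crucial structural difference from Proposition~\ref{proposition 3.1} is that here $N\in V(-A)$ rather than $V(A)$, so $JN$ (and therefore $\xi$) lies in $T_q\mathbb{R}H^m=V(A)$ rather than in $JV(A)$; this single swap interchanges $\tanh$ and $\coth$ between the $\alpha$- and $\mu$-rows of the table, and in particular produces $\coth$ in place of $\tanh$ in the contact identity~(4).
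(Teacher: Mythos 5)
Your proposal is correct and follows essentially the same route as the paper's own justification (the discussion preceding Proposition~\ref{proposition 3.2}): use the $\mathfrak{A}$-principality of $\mathbb{R}H^m$ and parallelism of $J$ and $\mathfrak{A}$ along the normal geodesic to get $N\in V(-A)$, diagonalize the normal Jacobi operator with eigenvalues $0$ and $-2$ on the stated subspaces, and read off the tube's principal curvatures from the corresponding $\cosh$/$\sinh$ Jacobi fields, with statements (1) and (4) then immediate from the table. Your verified eigenvalue computation and the explicit check of the contact identity are consistent with the paper, just spelled out in more detail.
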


\section{Some general equations}\label{section 4}

Let $M$ be a  real hypersurface of ${Q^m}^*$ and denote by $(\phi,\xi,\eta,g)$ the induced almost contact metric structure. Note that $\xi = -JN$, where $N$ is a (local) unit normal vector field of $M$. The tangent bundle $TM$ of $M$ splits orthogonally into  $TM = {\mathcal C} \oplus {\mathbb R}\xi$, where ${\mathcal C} = {\rm ker}(\eta)$ is the maximal complex subbundle of $TM$. The structure tensor field $\phi$ restricted to ${\mathcal C}$ coincides with the complex structure $J$ restricted to ${\mathcal C}$, and $\phi \xi = 0$.

At each point $z \in M$ we also define the maximal ${\mathfrak A}$-invariant subspace of $T_zM$
\[
{\mathcal Q}_z = \{X \in T_zM \mid AX \in T_zM\ {\rm for\ all}\ A \in {\mathfrak A}_z\}.
\]

\begin{lm}\label{lemma 4.1}
For each $z \in M$ we have
\begin{itemize}
\item[(i)] If $N_z$ is ${\mathfrak A}$-principal, then ${\mathcal Q}_z = {\mathcal C}_z$.
\item[(ii)] If $N_z$ is not ${\mathfrak A}$-principal, there exist a conjugation $A \in {\mathfrak A}_z$ and orthonormal vectors $X,Y \in V(A)$ such that $N_z = \cos(t)X + \sin(t)JY$ for some $t \in (0,\pi/4]$.
Then we have ${\mathcal Q}_z = {\mathcal C}_z \ominus {\mathbb C}(JX + Y)$.
\end{itemize}
\end{lm}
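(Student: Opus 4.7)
The plan is to convert $\mathfrak{A}$-invariance into orthogonality against the $\mathfrak{A}$-orbit of $N$. Each $A \in \mathfrak{A}_z$ is an involutive orthogonal endomorphism of $T_z\HQ$, hence self-adjoint, so for $X \in T_zM$ the condition $AX \in T_zM$ is equivalent to $g(X,AN) = 0$. Writing $\mathfrak{A}_z = \{\lambda A_0 : \lambda \in S^1\}$ for any fixed $A_0$ and observing that $(\lambda A_0)N = \cos\theta\cdot A_0 N + \sin\theta\cdot J A_0 N$ for $\lambda = e^{i\theta}$, the $\mathfrak{A}_z$-orbit of $N$ spans the real $2$-plane $\mathrm{span}_{\mathbb{R}}\{A_0 N,\, J A_0 N\}$ inside $T_z\HQ$. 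Consequently
\[
\mathcal{Q}_z \;=\; \bigl(\mathbb{R} N \,+\, \mathbb{R}\,A_0 N \,+\, \mathbb{R}\,J A_0 N\bigr)^{\perp} \cap T_z\HQ ,
\]
so the entire proof reduces to computing $A_0 N$ in each case.

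For (i), the $\mathfrak{A}$-principal hypothesis lets me choose $A_0 \in \mathfrak{A}_z$ with $A_0 N = N$; the orbit $2$-plane then collapses to $\mathrm{span}\{N, JN\}$ and $\mathcal{Q}_z = \{N,JN\}^{\perp} = \ker\eta = \mathcal{C}_z$, using $\xi = -JN$. For (ii), I would take $A = A_0$ together with the Reckziegel canonical form $N = \cos(t)X + \sin(t)JY$ and use $AX = X$, $AY = Y$, and the anti-commutation $A\circ J = -J\circ A$ to obtain $AN = \cos(t)X - \sin(t)JY$ and $JAN = \cos(t)JX + \sin(t)Y$. Taking sums and differences with $N$, the $3$-plane $\mathrm{span}\{N, AN, JAN\}$ simplifies to $\mathrm{span}\{X,\, JY,\, \cos(t)JX + \sin(t)Y\}$. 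Hence $\mathcal{Q}_z$ is the set of $Z \in T_zM$ satisfying the three scalar relations $g(Z,X) = 0$, $g(Z,JY) = 0$, and $\cos(t)\,g(Z,JX) + \sin(t)\,g(Z,Y) = 0$.

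The remaining step, and the only one requiring genuine computation, is to match this three-relation description against the claimed identity $\mathcal{Q}_z = \mathcal{C}_z \ominus \mathbb{C}(JX+Y)$. I would verify this by explicit basis construction: first check $g(JX+Y, N) = 0$, so that $JX+Y \in T_zM$, and use $J(JX+Y) = -X+JY$ to fix how the complex line $\mathbb{C}(JX+Y)$ sits relative to $\mathcal{C}_z$; then produce a basis of $\mathcal{C}_z \ominus \mathbb{C}(JX+Y)$ adapted to the orthogonal splitting $T_z\HQ = V(A) \oplus JV(A)$ and check, vector by vector, that each basis element satisfies the three orthogonality relations above, with a final dimension count forcing equality. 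The central algebraic input throughout is the anti-commutation $AJ = -JA$, which both yields the closed-form expressions for $AN$ and $JAN$ and singles out $\mathbb{C}(JX+Y)$ as the correct complex line to remove; this linear-algebraic bookkeeping in the $(X,Y,JX,JY)$ basis is the main --- though entirely routine --- technical obstacle.
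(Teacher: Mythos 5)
Your opening reduction is where the argument breaks. The equivalence ``$AZ\in T_zM$ for all $A\in\mathfrak{A}_z$ $\Leftrightarrow$ $g(Z,AN)=0$ for all $A\in\mathfrak{A}_z$'' is correct as a characterization of that pointwise tangency condition, and it does give $\mathcal{Q}_z^{\mathrm{cand}}:=\{N,A_0N,JA_0N\}^{\perp}$. But in case (ii) this space has real dimension $2m-3$, whereas $\mathcal{C}_z\ominus\mathbb{C}(JX+Y)$ (which, as the paper's own proof shows, equals $T_z\HQ\ominus(\mathbb{C}X\oplus\mathbb{C}Y)=\{X,Y,JX,JY\}^{\perp}$) has real dimension $2m-4$. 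So the ``final dimension count forcing equality'' that you defer as routine cannot succeed. Concretely, take $Z_0=\sin(t)JX-\cos(t)Y$: it satisfies your three relations $g(Z_0,X)=0$, $g(Z_0,JY)=0$, $\cos(t)g(Z_0,JX)+\sin(t)g(Z_0,Y)=0$ and $g(Z_0,N)=0$, hence $Z_0\in\mathcal{Q}_z^{\mathrm{cand}}$; yet $g(Z_0,\xi)=-\sin(2t)\neq 0$ for $t\in(0,\pi/4]$, so $Z_0\notin\mathcal{C}_z$, let alone $\mathcal{C}_z\ominus\mathbb{C}(JX+Y)$ (for $t=\pi/4$ one has $Z_0=-\xi$ up to sign). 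Thus your candidate space strictly contains the space the lemma asserts, and the verification you postpone is exactly where the approach fails.

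The missing idea is that $\mathcal{Q}_z$, as it is used in the lemma and throughout the paper, is the maximal subspace of $T_zM$ that is mapped \emph{into itself} by every conjugation in $\mathfrak{A}_z$, not merely the set of vectors whose images under all conjugations are tangent. Since $JA_0=iA_0\in\mathfrak{A}_z$ and $J=(JA_0)\circ A_0$, any such invariant subspace is automatically $J$-invariant, hence contained in $\mathcal{C}_z$; this is precisely what excludes $Z_0$ (indeed $g(JZ_0,N)=-\sin(2t)\neq 0$, so the $\mathfrak{A}_z$-orbit of $Z_0$ leaves $T_zM$ after a second application of a conjugation, even though the first application stays tangent). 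The paper's proof therefore argues with invariant subspaces rather than with orthogonality to the orbit of $N$: it shows that $\mathbb{C}N_z$ (case (i)) resp.\ $\mathbb{C}X\oplus\mathbb{C}Y$ (case (ii)) is $\mathfrak{A}_z$-invariant because any two conjugations in $\mathfrak{A}_z$ differ by a rotation, and then takes the orthogonal complement, which lies in $T_zM$ and is the maximal invariant subspace. Your case (i) is unaffected (there $\mathcal{C}_z$ itself is $\mathfrak{A}_z$-invariant, so the two readings coincide), but in case (ii) you must additionally impose closure of the candidate space under the conjugations --- equivalently, conditions such as $g(JZ,N)=0$ --- and it is exactly this step that removes the extra line $\mathbb{R}Z_0$ and brings the dimension down to $2m-4$.
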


 \begin{proof} First suppose that $N_z$ is ${\mathfrak A}$-principal. Then there exists a conjugation $A \in {\mathfrak A}_z$ such that $N_z \in V(A_z)$, that is, $AN_z = N_z$. We thus have $A\xi_z = -AJN_z = JAN_z = JN_z = -\xi_z$. It follows that $A$ restricted to ${\mathbb C}N_z$ is the orthogonal reflection in the line ${\mathbb R}N_z$. Because all conjugations in ${\mathfrak A}_z$ differ just by a rotation on such planes, we see that ${\mathbb C}N_z$ is invariant under ${\mathfrak A}_z$. This implies that ${\mathcal C}_z = T_z{Q^m}^* \ominus {\mathbb C}N_z$ is invariant under ${\mathfrak A}_z$, and hence ${\mathcal Q}_z = {\mathcal C}_z$ holds.

Now suppose that $N_z$ is not ${\mathfrak A}$-principal. By a result due to Reckziegel \cite[Proposition~3]{R} there exist $t \in (0,\pi/4]$, 
a conjugation $A \in {\mathfrak A}_z$ and orthonormal vectors $X,Y \in V(A)$ so that $N_z = \cos(t)X + \sin(t)JY$ holds. The conjugation $A$ restricted to ${\mathbb C}X \oplus {\mathbb C}Y$ is just the orthogonal reflection in ${\mathbb R}X \oplus {\mathbb R}Y$. Again, since all conjugations in ${\mathfrak A}_z$ differ just by a rotation on such invariant spaces we see that ${\mathbb C}X \oplus {\mathbb C}Y$ is invariant under ${\mathfrak A}_z$. This implies that $T_z{Q^m}^* \ominus ({\mathbb C}X \oplus {\mathbb C}Y) = {\mathcal C}_z \ominus {\mathbb C}(JX+Y)$ is invariant under ${\mathfrak A}_z$, and hence ${\mathcal Q}_z = {\mathcal C}_z \ominus {\mathbb C}(JX+Y)$ holds.
\end{proof}

\medskip
We see from the previous lemma that the rank of the distribution ${\mathcal Q}$ is generally not constant on $M$. However, if $N_z$ is not ${\mathfrak A}$-principal for some $z\in M$,
then $N$ is also not ${\mathfrak A}$-principal on an open neighborhood of $z$, and therefore ${\mathcal Q}$ then is a regular distribution on that open neighborhood of $z$.

\medskip
We are interested in real hypersurfaces $M$ for which both the maximal complex subbundle ${\mathcal C}$ and the  maximal $\mathfrak{A}$-invariant subbundle ${\mathcal Q}$ of $TM$
are invariant under the shape operator $S$ of $M$. A real hypersurface $M$ of a K\"ahler manifold
is called a \emph{Hopf hypersurface} if $\mathcal{C}$ is invariant under the shape operator of $M$. One can show that $M$ is Hopf if and only if the Reeb flow on $M$,
i.e.~the flow of the Reeb vector field $\xi=-JN$, is geodesic. An equivalent condition is that
the shape operator $S$ of $M$ in $\HQ$ satisfies
$$ S\xi = \alpha \xi $$
with the smooth function $\alpha = g(S\xi,\xi)$ on $M$. Note that then $\xi$ is a principal curvature vector field corresponding to the principal curvature function $\alpha$.

\begin{lm}\label{lemma 4.2}
Let $M$ be a Hopf hypersurface in the complex hyperbolic quadric $\HQ$ with (local) unit normal vector field $N$. For each point in $z \in M$ we choose $A \in {\mathfrak A}_z$ such that
$N_z = \cos(t)Z_1 + \sin(t)JZ_2$ holds
for some orthonormal vectors $Z_1,Z_2 \in V(A)$ and $0 \leq t \leq \frac{\pi}{4}$. Then
\begin{eqnarray*}
0 & = & 2g(S \phi SX,Y) - \alpha g((\phi S + S\phi)X,Y) +  2g(\phi X,Y) \\
& & - 2g(X,AN)g(Y,A\xi) + 2g(Y,AN)g(X,A\xi) \\
& &  - 2g(\xi,A\xi) \{g(Y,AN)\eta(X) - g(X,AN)\eta(Y) \}
\end{eqnarray*}
holds for all vector fields $X$ and $Y$ on $M$.
\end{lm}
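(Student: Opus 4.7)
The plan is to derive the identity from the Codazzi equation of the hypersurface $M$ in $\HQ$, combined with the Hopf condition and the explicit formula for the ambient curvature tensor given in Section~\ref{section 2}, and to eliminate the awkward term $(\nabla_\xi S)X$ by antisymmetrisation in $X$ and $Y$. Since $\bar{\nabla} N = -S$ and $J$ is $\bar{\nabla}$-parallel, the tangential part of $\bar{\nabla}_X \xi$ is $\phi SX$; combined with the Hopf relation $S\xi = \alpha\xi$ this yields
\[ (\nabla_X S)\xi = (X\alpha)\xi + \alpha\,\phi SX - S\phi SX. \]
Plugging this into the Codazzi equation with $Y=\xi$, pairing with an arbitrary tangent vector, and then swapping the roles of $X$ and $Y$ produces two equations whose difference makes $g((\nabla_\xi S)X, Y)$ drop out, since $(\nabla_\xi S)$ is self-adjoint. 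Using the skew-adjointness of $\phi$, the remaining left-hand side reduces to
\[ (X\alpha)\eta(Y) - (Y\alpha)\eta(X) + \alpha\,g((\phi S + S\phi)X, Y) - 2\,g(S\phi SX, Y). \]

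The right-hand side is the antisymmetric curvature expression $g(\bar{R}(X,\xi)N, Y) - g(\bar{R}(Y,\xi)N, X)$, which I will compute directly from the formula of Section~\ref{section 2}. The key identities used are $J\xi = N$, $JN = -\xi$, the self-adjointness of $JA$ (so that $g(JAX,Y) = g(X, JAY)$), and the orthogonality $g(\xi, AN) = g(A\xi, N) = 0$. To justify the last, I write $N$ in the Reckziegel normal form $N = \cos(t)\,Z_1 + \sin(t)\,JZ_2$ with $Z_1,Z_2 \in V(A)$ orthonormal; then $A\xi = JAN = \cos(t)\,JZ_1 + \sin(t)\,Z_2$, and a direct calculation using $AZ_j = Z_j$ and $AJ = -JA$ gives $g(\xi, AN) = 0$. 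Under antisymmetrisation the $g(\xi, A\xi)\,g(JAX, Y)$ terms then cancel, and the curvature difference collapses to
\[ -2\,g(\phi X, Y) + 2\,g(X, AN)\,g(Y, A\xi) - 2\,g(Y, AN)\,g(X, A\xi). \]

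To dispose of the derivative terms $(X\alpha)\eta(Y) - (Y\alpha)\eta(X)$, I return to the Codazzi equation with $Y = \xi$ and now take the inner product with $\xi$ itself. Direct computation gives $g((\nabla_X S)\xi, \xi) = X\alpha$ and $g((\nabla_\xi S)X, \xi) = (\xi\alpha)\eta(X)$, while the curvature contribution reduces to $g(\bar{R}(X,\xi)N, \xi) = 2\,g(\xi, A\xi)\,g(X, AN)$ after using $g(\xi,AN)=0$ and $g(JAX,\xi) = -g(AX, N) = -g(X, AN)$. This gives the gradient formula
\[ X\alpha = (\xi\alpha)\,\eta(X) - 2\,g(\xi, A\xi)\,g(X, AN), \]
so that $(X\alpha)\eta(Y) - (Y\alpha)\eta(X) = -2\,g(\xi, A\xi)\bigl[g(X, AN)\,\eta(Y) - g(Y, AN)\,\eta(X)\bigr]$. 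Substituting this into the antisymmetrised identity and collecting terms produces exactly the expression in the statement.

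The main obstacle will be the careful bookkeeping in the curvature computation: matching the sign convention of the Codazzi equation with that of the ambient curvature formula of Section~\ref{section 2}, exploiting the orthogonality $g(\xi,AN)=0$ and the self-adjointness of $JA$ at the right moments, and collecting the numerous resulting terms into the six pieces of the claimed identity without error.
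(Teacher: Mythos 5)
Your plan follows the paper's own route almost step for step: the Codazzi equation with one slot set equal to $\xi$, the Hopf computation $(\nabla_XS)\xi=(X\alpha)\xi+\alpha\phi SX-S\phi SX$, a gradient formula for $\alpha$ obtained by pairing with $\xi$, the Reckziegel normal form to get $g(\xi,AN)=0$, and the identity $JA\xi=-AN$ to merge the remaining $A$-terms. The paper contracts the Codazzi equation with $Z=\xi$ and uses self-adjointness of $\nabla_XS$, while you antisymmetrise the $Y=\xi$ version and use self-adjointness of $\nabla_\xi S$; these are the same computation up to the pair symmetry and first Bianchi identity of $\bar{R}$.

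There is, however, one concrete sign problem that, taken literally, would prevent ``collecting terms'' from giving the statement. With the curvature formula of Section~\ref{section 2} and the Codazzi equation in the form the paper uses, one has $g((\nabla_XS)Y-(\nabla_YS)X,Z)=g(\bar{R}(X,Y)Z,N)=-g(\bar{R}(X,Y)N,Z)$, so after antisymmetrisation the curvature side of your identity is $-\bigl[g(\bar{R}(X,\xi)N,Y)-g(\bar{R}(Y,\xi)N,X)\bigr]=2g(\phi X,Y)-2g(X,AN)g(Y,A\xi)+2g(Y,AN)g(X,A\xi)$, i.e.\ the negative of the value you assert (your evaluation of the expression $g(\bar{R}(X,\xi)N,Y)-g(\bar{R}(Y,\xi)N,X)$ itself is correct). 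By contrast, your gradient formula $X\alpha=(\xi\alpha)\eta(X)-2g(\xi,A\xi)g(X,AN)$ already uses the correct sign, and is indeed the right one (the displayed gradient formula in the paper's proof has a sign typo which is silently corrected at the next step). So as written your two ingredients are mutually inconsistent: substituting them as stated yields the Lemma with the block $2g(\phi X,Y)-2g(X,AN)g(Y,A\xi)+2g(Y,AN)g(X,A\xi)$ negated, which is false --- test it on the tube around ${Q^{m-1}}^*$, where $AN=N$, all $g(\,\cdot\,,AN)$-terms vanish on $TM$, and the Lemma reduces to the true relation $\alpha\mu=2$, whereas the negated version would force $\alpha\mu=-2$. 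Once this single sign in the identification of the curvature side is repaired, your argument closes exactly as you describe and coincides with the paper's proof.
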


\begin{proof}
For any vector field $X$ on $M$ in $\HQ$, we may decompose $JX$ as
$$JX={\phi}X+{\eta}(X)N$$
using a unit normal vector field $N$ to $M$. In this situation, the Codazzi equation states
\begin{equation*}
\begin{split}
g((\nabla_XS)Y - (\nabla_YS)X,Z) & =  -\eta(X)g(\phi Y,Z) + \eta(Y) g(\phi X,Z) + 2\eta(Z) g(\phi X,Y) \\
& \quad \ \   - g(X,AN)g(AY,Z) + g(Y,AN)g(AX,Z)\\
& \quad \ \   - g(X,A\xi)g(J AY,Z) + g(Y,A\xi)g(JAX,Z).
\end{split}
\end{equation*}
By putting $Z = \xi$ we get
\begin{equation*}
\begin{split}
g((\nabla_XS)Y - (\nabla_YS)X,\xi) & =    2 g(\phi X,Y) \\
& \quad \ \  - g(X,AN)g(Y,A\xi) + g(Y,AN)g(X,A\xi)\\
& \quad \ \  + g(X,A\xi)g(JY,A\xi) - g(Y,A\xi)g(JX,A\xi).
\end{split}
\end{equation*}
On the other hand, we have
\begin{eqnarray*}
 & & g((\nabla_XS)Y - (\nabla_YS)X,\xi) \\
& = & g((\nabla_XS)\xi,Y) - g((\nabla_YS)\xi,X) \\
& = & (X\alpha)\eta(Y) - (Y\alpha)\eta(X) + \alpha g((S\phi + \phi
S)X,Y) - 2g(S \phi SX,Y).
\end{eqnarray*}
Comparing the previous two equations and putting $X = \xi$ yields
$$
Y\alpha  =  (\xi \alpha)\eta(Y)  - 2g(\xi,AN)g(Y,A\xi) +
2g(Y,AN)g(\xi,A\xi).
$$
Inserting this into the previous equation gives
\begin{eqnarray*}
 & & g((\nabla_XS)Y - (\nabla_YS)X,\xi) \\
& = &  2g(\xi,AN)g(X,A\xi)\eta(Y) - 2g(X,AN)g(\xi,A\xi)\eta(Y) \\
& &  - 2g(\xi,AN)g(Y,A\xi)\eta(X) + 2g(Y,AN)g(\xi,A\xi)\eta(X) \\
& & + \alpha g((\phi S + S\phi)X,Y) - 2g(S \phi SX,Y) .
\end{eqnarray*}
Altogether this implies
\begin{eqnarray}
0 & = & 2g(S \phi SX,Y) - \alpha g((\phi S + S\phi)X,Y) + 2 g(\phi X,Y) \notag \\
& & - g(X,AN)g(Y,A\xi) + g(Y,AN)g(X,A\xi) \notag \\
& & + g(X,A\xi)g(JY,A\xi) - g(Y,A\xi)g(JX,A\xi) \notag \\
& & - 2g(\xi,AN)g(X,A\xi)\eta(Y) + 2g(X,AN)g(\xi,A\xi)\eta(Y)  \notag \\
\label{eq:sect4:eq1}
& &  + 2g(\xi,AN)g(Y,A\xi)\eta(X) - 2g(Y,AN)g(\xi,A\xi)\eta(X).
\end{eqnarray}
At each point $z \in M$ we again choose $A \in {\mathfrak A}_z$, orthogonal vectors $Z_1,Z_2 \in V(A)$ and $0 \leq t \leq \frac{\pi}{4}$ so that 
\[ N = \cos(t)Z_1 + \sin(t)JZ_2 \]
holds (see \cite[Proposition 3]{R}). Note that the quantities $t$, $A$ and $Z_k$ depend on $z\in M$. 
Because of $\xi = -JN$ we have
\begin{eqnarray*}
N & = & \cos (t) Z_1 + \sin (t)JZ_2, \\
AN & = & \cos (t)Z_1 - \sin (t)JZ_2, \\
\xi & = & \sin (t)Z_2 - \cos (t)JZ_1, \\
A\xi & = & \sin (t)Z_2 + \cos (t)JZ_1.
\end{eqnarray*}
This implies $g(\xi,AN) = 0$ and hence we get from Equation~\eqref{eq:sect4:eq1}
\begin{eqnarray*}
0 & = & 2g(S \phi SX,Y) - \alpha g((\phi S + S\phi)X,Y) + 2 g(\phi X,Y) \\
& & - g(X,AN)g(Y,A\xi) + g(Y,AN)g(X,A\xi)\\
& & + g(X,A\xi)g(JY,A\xi) - g(Y,A\xi)g(JX,A\xi)\\
& &  + 2g(X,AN)g(\xi,A\xi)\eta(Y) - 2g(Y,AN)g(\xi,A\xi)\eta(X).
\end{eqnarray*}
We have $JA\xi = -AJ\xi = - AN$, and inserting this into the previous equation implies the statement of the lemma.
\end{proof}
\par
\vskip 6pt
The preceding formula will be applied both here and in Section~\ref{section 5} 
to get more information on Hopf hypersurfaces with constant mean curvature for which the normal vector field is ${\mathfrak A}$-principal everywhere.
As in Section~\ref{section 2}, we denote by $\bar{\nabla}$ the canonical covariant derivative of ${Q^m}^*$, and by $\bar{\nabla}^{\End}$ the induced covariant
derivative on the endomorphism bundle $\End(T{Q^m}^*)$.
\par
\vskip 6pt
\begin{lm}\label{lemma 4.5}
Let $M$ be a Hopf hypersurface in the complex hyperbolic quadric $\HQ$, $m \geq 3$, such that the
normal vector field $N$ is ${\mathfrak A}$-principal everywhere. Let $A$ be the section of the $S^1$-bundle $\mathfrak{A}$ so that $AN=N$ holds.
Then we have the following:
\begin{itemize}
\item[(i)] ${\bar{\nabla}}^{\End}_XA=0$ for any $X{\in}{\mathcal C}$.
\item[(ii)] $ASX=SX$ for any $X{\in}{\mathcal C}$.
\end{itemize}
\end{lm}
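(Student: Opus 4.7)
The plan is to differentiate the defining identity $AN=N$ along tangent directions to $M$, using the ambient formula $\bar{\nabla}^{\End}_X A=q(X)\,JA$ established at the end of Section~\ref{section 2}, and then exploit the fact that $A$ and $S$ both respect the splitting $TM=\mathcal{C}\oplus\mathbb{R}\xi$ to read off both statements simultaneously.

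First I would record two preliminary observations. Since $A$ is a real structure, it anti-commutes with $J$; combined with $AN=N$ and $\xi=-JN$ this gives $A\xi=-AJN=JAN=JN=-\xi$. Thus the plane $\mathbb{C}N=\mathbb{R}N\oplus\mathbb{R}\xi$ is $A$-invariant, and consequently so is its orthogonal complement $\mathcal{C}$ in $T{Q^m}^*$. On the other hand, $M$ being Hopf means $S\xi=\alpha\xi$, hence for $X\in\mathcal{C}$ one has $g(SX,\xi)=g(X,S\xi)=\alpha\,\eta(X)=0$, i.e.\ $S$ also preserves $\mathcal{C}$.

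Next I would differentiate $AN=N$ with respect to an arbitrary $X\in T_zM$, using the Weingarten formula $\bar{\nabla}_XN=-SX$ and the product rule for $\bar{\nabla}^{\End}$:
\begin{equation*}
-SX \;=\; \bar{\nabla}_X(AN) \;=\; (\bar{\nabla}^{\End}_XA)N+A\bar{\nabla}_XN \;=\; q(X)\,JAN - ASX \;=\; -q(X)\,\xi - ASX,
\end{equation*}
so that
\begin{equation*}
ASX \;=\; SX - q(X)\,\xi.
\end{equation*}

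Finally, restricting to $X\in\mathcal{C}$, both $SX$ and $ASX$ lie in $\mathcal{C}$ by the preliminary observations, so the displayed equation forces $q(X)\,\xi\in\mathcal{C}\cap\mathbb{R}\xi=\{0\}$, whence $q(X)=0$. This immediately yields $\bar{\nabla}^{\End}_XA=q(X)\,JA=0$, which is (i), and substituting back gives $ASX=SX$, which is (ii). There is no genuine obstacle here; the one point that requires care is the sign bookkeeping in the anti-linearity computation $A\xi=-\xi$, since the whole argument rests on the resulting invariance of $\mathcal{C}$ under~$A$.
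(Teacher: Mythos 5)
Your proof is correct and follows essentially the same route as the paper: both differentiate the $\mathfrak{A}$-principality relation using $\bar{\nabla}^{\End}_XA=q(X)\,JA$ together with the Weingarten formula, and use the Hopf condition to make $q$ vanish on $\mathcal{C}$. The only difference is organizational: you differentiate the vector identity $AN=N$ once and read off (i) and (ii) simultaneously from the $A$- and $S$-invariance of $\mathcal{C}$, whereas the paper first differentiates $g(AN,JN)=0$ (obtaining the slightly sharper identity $q(X)=2\alpha\,\eta(X)$ on all of $TM$) and then differentiates $AJN=-JN$ to deduce (ii).
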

\begin{proof}
Let $q:T{Q^m}^* \to \R$ be the real-valued 1-form on ${Q^m}^*$ so that
\begin{equation}
\label{eq:lemma45:q}
\bar{\nabla}^{\End}_X A = q(X)\cdot JA \quad \text{holds for every $X \in T{Q^m}^*$,}
\end{equation}
see the end of Section~\ref{section 2}. Then let us differentiate
the equation $g(AN,JN)=0$ along any $X{\in}T_xM$, $x{\in}M$. Thereby we obtain
\begin{equation*}
\begin{split}
0=&g(({\bar{\nabla}}^\End_XA)N+A{\bar{\nabla}}_XN, JN)+g(AN,({\bar{\nabla}}^\End_XJ)N+J{\bar{\nabla}}_XN)\\
=&q(X)-g(ASX,JN)-g({\xi},SX) \; ;
\end{split}
\end{equation*}
for the second equals sign it was used that $\bar{\nabla}^\End J = 0$ holds because ${Q^m}^*$ is K\"ahlerian. This gives us for the $1$-form $q$ 
\begin{equation}\label{e41}
q(X)=-g(ASX,{\xi})+g({\xi},SX)=g(S{\xi},X)+g({\xi},SX)=2{\alpha}{\eta}(X),
\end{equation}
where we have used that because of $N \in V(A)$, we have $A\xi = -AJN = JAN = JN = -\xi$.
It follows from Equation~\eqref{e41} that $q(X)=0$ holds for any $X{\in}{\mathcal C}$, whence (i) follows.
\vskip 6pt
\par
Second, we differentiate the formula $AJN=-JAN=-JN$ along the distribution $\mathcal C$. By applying Equation~\eqref{eq:lemma45:q} and again $\bar{\nabla}^{\End}J=0$, we obtain
for \,$X\in \mathcal{C}$\,
$$q(X)JAJN-AJSX=JSX.$$
Because of (i), we have $q(X)=0$, and therefore $-AJSX =JSX$, which implies $ASX=SX$, completing the proof of (ii).
\end{proof}

\medskip

\section{Proof of Main Theorem}\label{section 5}
\par
\vskip 6pt
Let $M$ be an oriented real hypersurface of the complex hyperbolic quadric ${Q^m}^*$ with $m\geq 3$, and let $N$ be a unit normal field of $M$ in ${Q^m}^*$.
In the sequel, we will use the notations introduced in Section~\ref{section 4},
in particular the distributions $\mathcal{C}$ and $\mathcal{Q}$ defined there. As was explained in the Introduction, $M$ is an almost contact manifold, and
$(\phi,\xi,\eta,g)$ is an almost contact metric structure for $M$, where $\phi$ is the \emph{structure tensor field} defined by letting $\phi X$ be the orthogonal projection of $JX$ to $\mathcal C$
for any vector field $X$ on $M$, $\xi := -JN$ is the \emph{Reeb vector field} on $M$, and
$\eta(X)=g(X,\xi)$. We denote by $\nabla$ the Levi-Civita derivative on $M$ induced by the Riemannian metric $g$. 

If $M$ is a contact hypersurface, then
\begin{equation}
\label{eq:s5:phiS}
\phi S + S \phi = k \cdot \phi
\end{equation}
holds, see \cite[Equation~(2.13)]{O2}. Here $k$ is a non-zero constant because of $m\geq 3$, see \cite[Proposition~2.5]{BerndtSuh}. 
In this situation $M$ is automatically a \emph{Hopf hypersurface}, meaning that $\xi$ is a principal curvature direction. Indeed, Equation~\eqref{eq:s5:phiS} shows that $S$ leaves
$\R \xi = \ker \phi$ invariant. The principal curvature function corresponding to $\xi$ is denoted by $\alpha := g(S\xi,\xi)$.

If $X\in \mathcal{C}$ is another principal curvature direction, say for the principal curvature $\sigma$, then Equation~\eqref{eq:s5:phiS} shows that $\phi X$ is a principal curvature
direction for the principal curvature $k-\sigma$. Therefore the mean curvature of $M$ is given by
\begin{equation}
\label{eq:s5:H}
H = \tr(S) = \alpha + (m-1)\cdot k \; .
\end{equation}
It follows that if the contact hypersurface $M$ has constant mean curvature, then the function $\alpha$ is constant.

From here on, we let $M$ be a contact hypersurface, and use the notations introduced above for this situation, throughout the section.
The objective of this section is to prove the Main Theorem. In the first part of the section we will show that the unit normal field $N$ to $M$ is $\mathfrak{A}$-principal
if $M$ has constant mean curvature (Theorem~\ref{Theorem 5.5});
for this purpose we need Lemmas~\ref{lemma 5.2}--\ref{lemma 5.4}; one consequence is that the results of Lemma~\ref{lemma 4.5} are applicable. We can then
complete the classification. 

\begin{lm}\label{lemma 5.2}
Let $M$ be a contact hypersurface in the complex hyperbolic quadric ${Q^m}^*, m\geq 3$.
Then we have for any vector field $X$ on $M$ and any section $A$ of $\mathfrak{A}$
\begin{equation*}
\begin{split}
2S^2 X &=\Big[\eta(X)\big\{2\alpha^2 - \alpha k - 2\big\} - 2 \big\{g(\phi X, AN)\eta(A \xi) -\eta(AN)g(\phi X, A\xi)\big\}\Big]\xi \\
&\quad+2k SX -(\alpha k -2)X + g(\phi X, AN)(A\xi)^T - g(\phi X, A\xi)(AN)^T \\
&\quad+g(\phi X, A\xi)(JA\xi)^T - \{g(X, A\xi) -\eta(X) \eta(A\xi)\}(A\xi)^T \; , 
\end{split}
\end{equation*}
where $(A\xi)^T$, $(AN)^T$, and $(JA\xi)^T$ denote the tangential component of the vector fields $A\xi$, $AN$ and $JA\xi$, respectively.
\end{lm}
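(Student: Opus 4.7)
The plan is to deduce the identity by inserting the contact condition $\phi S + S\phi = k\phi$ (Equation~\eqref{eq:s5:phiS}) into the formula of Lemma~\ref{lemma 4.2} and then solving for $S^{2}X$. The first step is to use the contact condition to replace $\alpha\, g((\phi S+S\phi)X,Y)$ by $\alpha k\, g(\phi X,Y)$, and at the same time to rewrite $S\phi S=(k\phi-\phi S)S=k\phi S-\phi S^{2}$. After these two substitutions, the identity of Lemma~\ref{lemma 4.2} takes the form
\[
2g(\phi S^{2}X,Y) \;=\; 2k\, g(\phi SX,Y) - (\alpha k-2)\,g(\phi X,Y) + R(X,Y),
\]
where $R(X,Y)$ collects the $A$-dependent quadratic terms from Lemma~\ref{lemma 4.2}. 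Letting $Y$ range over $TM$ and reading off the orthogonal projections onto $TM$ converts this into a vector identity
\[
2\phi S^{2}X \;=\; 2k\,\phi SX - (\alpha k-2)\phi X + \widetilde R(X),
\]
with $\widetilde R(X)$ a combination of $(A\xi)^{T}$, $(AN)^{T}$, and $\xi$ whose scalar coefficients involve $g(X,AN)$, $g(X,A\xi)$, $\eta(X)$, $\eta(AN)$ and $\eta(A\xi)$.

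Next I would apply $\phi$ once more to both sides and use the structure-tensor identities $\phi^{2}Z=-Z+\eta(Z)\xi$ and $\phi\xi=0$. The Hopf hypothesis $S\xi=\alpha\xi$ gives $\eta(S^{2}X)=\alpha^{2}\eta(X)$, so that the left-hand side becomes $2\phi^{2}S^{2}X=-2S^{2}X+2\alpha^{2}\eta(X)\xi$; transposing the $\eta(X)\xi$ contribution and collecting the analogous ones coming from $\phi^{2}(SX)=-SX+\alpha\eta(X)\xi$ and $\phi^{2}X=-X+\eta(X)\xi$ produces precisely the coefficient $2\alpha^{2}-\alpha k-2$ of $\eta(X)\xi$ stated in the lemma. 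The terms already proportional to $\xi$ in $\widetilde R(X)$ are annihilated by $\phi$.

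The remaining task is to rewrite $\phi((A\xi)^{T})$ and $\phi((AN)^{T})$ so that the expression matches the form given in the statement. The basic anticommutation $JA=-AJ$ together with $\xi=-JN$ gives $JA\xi=-AN$, and hence $(JA\xi)^{T}=-(AN)^{T}$. A short computation using that $J$ is a K\"ahler isometry shows $\eta(A\xi)=g(AJN,JN)=-g(AN,N)$. Combining these yields
\[
\phi((A\xi)^{T}) = (JA\xi)^{T}+\eta(AN)\,\xi, \qquad \phi((AN)^{T}) = (A\xi)^{T}+g(AN,N)\,\xi.
\]
Substituting into the previous step and finally replacing the scalars $g(X,AN)-\eta(X)\eta(AN)$ and $-g(X,A\xi)-\eta(X)g(AN,N)$ by $g(\phi X,A\xi)$ and $g(\phi X,AN)$ respectively (both identities follow from $\phi X=JX-\eta(X)N$ together with $JAN=A\xi$ and $JA\xi=-AN$) produces exactly the expression of the lemma; in particular the two summands $-g(\phi X,A\xi)(AN)^{T}$ and $g(\phi X,A\xi)(JA\xi)^{T}$ on the right-hand side coalesce into the single coefficient $-2g(\phi X,A\xi)$ of $(AN)^{T}$ that appears in the intermediate form.

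The main obstacle is not any deep geometric idea but the extensive algebraic bookkeeping: several scalar quantities ($g(X,AN)$, $g(X,A\xi)$, $\eta(X)$, $\eta(AN)$, $\eta(A\xi)$, $g(AN,N)$) and several tangential vectors ($(A\xi)^{T}$, $(AN)^{T}$, $(JA\xi)^{T}$, $\xi$) must be tracked through the successive applications of $\phi$ and the projection onto $TM$. Careful attention to signs, in particular those produced by $JA=-AJ$ and by the identity $\eta(A\xi)=-g(AN,N)$, is essential for the various terms to coalesce into the compact form stated in the lemma.
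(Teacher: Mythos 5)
Your overall strategy is the same as the paper's: substitute the contact condition \eqref{eq:s5:phiS} into the Codazzi-derived identity behind Lemma~\ref{lemma 4.2}, pass to a vector identity, compose with $\phi$, and use $\phi^2=-\mathrm{id}+\eta\otimes\xi$ together with the Hopf condition $S\xi=\alpha\xi$ to isolate $2S^2X$. The auxiliary identities you invoke ($JA\xi=-AN$, $\eta(A\xi)=-g(AN,N)$, $\phi((A\xi)^T)=(JA\xi)^T+\eta(AN)\xi$, $g(\phi X,A\xi)=g(X,AN)-\eta(X)\eta(AN)$, etc.) are all correct, and the coefficient $2\alpha^2-\alpha k-2$ of $\eta(X)\xi$ comes out exactly as you describe. (The paper substitutes $X\mapsto\phi X$ into the vector identity rather than applying $\phi$ to it; these are different operations with different intermediate expressions, but both lead to the stated formula, so your route is legitimate.)

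There is, however, one genuine gap: your starting point. The displayed formula of Lemma~\ref{lemma 4.2} is valid only for the particular conjugation $A$ adapted to $N$ (the one with $N=\cos(t)Z_1+\sin(t)JZ_2$), because its derivation uses $g(\xi,AN)=0$, i.e.\ $\eta(AN)=0$, which fails for a general section of $\mathfrak{A}$. Lemma~\ref{lemma 5.2}, by contrast, is asserted for an \emph{arbitrary} section $A$, and its statement contains the term $\eta(AN)\,g(\phi X,A\xi)$, which vanishes identically for the adapted $A$. If you feed a non-adapted $A$ into the displayed formula of Lemma~\ref{lemma 4.2}, you are starting from a false identity; tracking your computation through, the output differs from the asserted formula by $2\,\eta(AN)\,\eta(X)\bigl((AN)^T-\eta(AN)\,\xi\bigr)$, which is nonzero in general. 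The fix is to start instead from the intermediate Codazzi identity \eqref{eq:sect4:eq1} in the proof of Lemma~\ref{lemma 4.2} (this is what the paper's Equation~\eqref{5.2} actually is), which holds for every section $A$ and retains the terms $\mp 2g(\xi,AN)g(\,\cdot\,,A\xi)\eta(\,\cdot\,)$; these are precisely the source of the $\eta(AN)$-term in the statement. With that corrected starting point your computation does yield the lemma.
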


\begin{proof}\label{Proof}
It follows from the formula in Lemma~\ref{lemma 4.2} that we have
\begin{equation}\label{5.2}
\begin{split}
0&=2\{-g(S^2 \phi X, Y) + k g(S \phi X, Y)\} - \alpha k g(\phi X, Y) + 2 g(\phi X, Y) \\
&\quad -g(X, AN) g(Y, A\xi) + g(Y, AN)g(X, A\xi) + g(X, A\xi)g(JY, A\xi)\\
&\quad -g(Y, A\xi)g(JX, A\xi)-2g(\xi, AN)g(X, A\xi)\eta(Y) +2g(X, AN)g(\xi, A\xi) \eta(Y)\\
&\quad +2g(\xi, AN)g(Y, A\xi)\eta(X) -2g(Y, AN)g(\xi, A\xi)\eta(X)\;.
\end{split}
\end{equation}
Therefore we have
\begin{equation}\label{5.3}
\begin{split}
&2S^2 \phi X - 2k S\phi X + (\alpha k - 2) \phi X - 2\{g(X, AN) g(\xi, A\xi) - g(\xi, AN) g(X, A\xi)\}\xi \\
&= -g(X, AN)(A\xi)^T + g(X, A\xi)(AN)^T - g(X, A\xi)(JA\xi)^T - g(JX, A\xi)(A\xi)^T \\
&\quad +2\{g(\xi, AN)\eta(X)(A\xi)^T - g(\xi, A\xi)\eta(X)(AN)^T\} \; . 
\end{split}
\end{equation}
By replacing $X$ by $\phi X$ in this equation, we obtain
\begin{equation*}
\begin{split}
&2 S^{2}\phi^2 X - 2k S \phi^2 X + (\alpha k -2)\phi^2 X - 2\{g(\phi X, AN)g(\xi, A\xi) -g(\xi, AN) g(\phi X, A\xi)\}\\
&=-g(\phi X, AN)(A\xi)^T + g(\phi X, A\xi)(AN)^T - g(\phi X, A\xi)(JA\xi)^T - g(J\phi X, A\xi)(A\xi)^T \; . 
\end{split}
\end{equation*}
This equation can be rearranged as follows:
\begin{equation}\label{5.4}
\begin{split}
2S^2 X &= 2\eta(X)S^2\xi + 2k SX - 2k \eta(X) S\xi - (\alpha k -2)X + \eta(X)(\alpha k -2)\xi \\
        &\quad -2\{g(\phi X, AN) g(\xi, A\xi) - g(\xi, AN)g(\phi X, A\xi)\}\xi \\
        &\quad +g(\phi X, AN)(A\xi)^T - g(\phi X, A\xi)(AN)^T + g(\phi X, A\xi)(JA\xi)^T \\
        &\quad +g(J\phi X, A\xi)(A\xi)^T\\
&=\eta(X)\{2\alpha^2 - 2\alpha k + (\alpha k -2)\}\xi + 2k SX - (\alpha k -2) X\\
        &\quad -2\{g(\phi X, AN)g(\xi, A\xi) - g(\xi, AN)g(\phi X, A\xi)\} \xi \\
        &\quad +g(\phi X, AN)(A\xi)^T - g(\phi X, A\xi)(AN)^T + g(\phi X, A\xi)(JA\xi)^T \\
        &\quad +g(J\phi X, A\xi)(A\xi)^T \; ,
\end{split}
\end{equation}
completing the proof of the lemma.
\end{proof}

\begin{lm}\label{lemma 5.3}
For a contact hypersurface $M$ in the complex hyperbolic quadric ${Q^m}^*$, $m\geq 3$, we have for any vector field $X$ on $M$ and any section $A$ of $\mathfrak{A}$
\begin{equation*}
\begin{split}
\sum_{i=1}^{2m-1}  g( (\nabla_{E_i} S)\phi X, E_i) &= \sum_{i=1}^{2m-1} g((\nabla_{\phi X} S)E_i, E_i)\\
&\quad -g(A\phi X, (AN)^T) + g(\phi X, AN)({\rm Tr} A-g(AN, N))\\
&\quad -g(JA\phi X, (A\xi)^T) - g(\phi X, A\xi)g(A\xi, N),
\end{split}
\end{equation*}
Here $(E_i)_{i=1,\dotsc,2m-1}$ is an orthonormal frame field of $M$, we put $E_{2m}:= N$ and ${\rm Tr} A=\sum_{i=1}^{2m} g(AE_i, E_i)$.
\end{lm}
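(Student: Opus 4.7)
\medskip

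\noindent\textbf{Proof proposal for Lemma~\ref{lemma 5.3}.} The plan is to apply the Codazzi equation (as recorded in the proof of Lemma~\ref{lemma 4.2}) with the substitution $X \leadsto E_i$, $Y \leadsto \phi X$ (where $X$ denotes the vector field appearing in the lemma), and $Z \leadsto E_i$, and then sum over $i=1,\dotsc,2m-1$. This produces the left-hand side $\sum_i g((\nabla_{E_i} S)\phi X - (\nabla_{\phi X} S)E_i, E_i)$ on the one side, and a sum of seven explicit terms coming from the right-hand side of the Codazzi equation on the other side. The task is then to simplify each of these seven summands.

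The first three summands are
\begin{equation*}
-\sum_i \eta(E_i)\,g(\phi^2 X, E_i) + \sum_i \eta(\phi X)\,g(\phi E_i, E_i) + 2\sum_i \eta(E_i)\,g(\phi E_i, \phi X),
\end{equation*}
and all three vanish: the first equals $-g(\phi^2 X, \xi)$, and $g(\phi^2 X,\xi)=0$ since $\eta\circ\phi=0$; the second is a multiple of $\eta(\phi X)=0$; the third equals $2g(\phi\xi,\phi X)$, and $\phi\xi=0$. So only the four ``$A$-terms'' survive.

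For the four remaining summands I would use the elementary identity $\sum_{i=1}^{2m-1} g(v,E_i)\,E_i = v^T$ valid for any $v \in T_z {Q^m}^*$, where $v^T$ is the $M$-tangential component of $v$. Applied in the appropriate slot this immediately yields the terms $-g(A\phi X,(AN)^T)$ and $-g(JA\phi X,(A\xi)^T)$ in the stated formula. For the coefficient of $g(\phi X, AN)$ I would write $\sum_{i=1}^{2m-1} g(AE_i, E_i) = \mathrm{Tr}\, A - g(AN,N)$ since $E_{2m}=N$, which produces the factor $\mathrm{Tr}\,A - g(AN,N)$.

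The step I expect to require the most care is the treatment of the remaining coefficient $\sum_{i=1}^{2m-1} g(JAE_i, E_i)$. The key observation is that $\mathrm{Tr}(JA)=0$ on the whole tangent space: since $A$ anticommutes with $J$, the map $JA$ interchanges $V(A)$ and $JV(A)$ isometrically, hence has vanishing diagonal entries in any orthonormal basis adapted to this splitting. Consequently
\begin{equation*}
\sum_{i=1}^{2m-1} g(JAE_i, E_i) = -g(JAN, N) = -g(A\xi, N),
\end{equation*}
where at the last step I use $JAN = -AJN = A\xi$ (because $\xi=-JN$). Assembling these four contributions gives exactly the right-hand side claimed in the lemma, completing the proof.
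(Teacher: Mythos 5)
Your proposal is correct and takes essentially the same route as the paper: the paper's proof of this lemma also rests on substituting $X=E_i$, $Y=\phi X$, $Z=E_i$ into the Codazzi equation, summing over the orthonormal frame, and converting $\sum_i g(v,E_i)E_i$ into tangential parts, with $\sum_{i=1}^{2m-1}g(AE_i,E_i)=\mathrm{Tr}A-g(AN,N)$ and $\sum_{i=1}^{2m-1}g(JAE_i,E_i)=-g(A\xi,N)$. Your two small deviations are improvements rather than gaps: you check explicitly that the three $\eta$-terms of the Codazzi right-hand side vanish (the paper drops them silently), and you obtain the last trace identity from $\mathrm{Tr}(JA)=0$ (since $JA$ interchanges $V(A)$ and $JV(A)$), which is cleaner than the paper's computation, whose intermediate step tacitly uses $\sum_i g(AE_i,\phi E_i)=0$.
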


\begin{proof}
We first note that we have the following formulas for the differentiation of $\phi$ (which follow from $\bar{\nabla}^{\End}J=0$), where $X$ and $Y$ are arbitrary vector fields on $M$:
\begin{equation}
\label{eq:s5:phi-diff}
({\nabla}_Y{\phi})X={\eta}(X)SY-g(SY,X){\xi}
\quad\text{and}\quad
({\nabla}_Y{\phi})SX={\eta}(SX)SY-g(SY,SX){\xi} \; .
\end{equation}
We now differentiate Equation~\eqref{eq:s5:phiS} in the direction of a vector field $X$ on $M$, giving
\begin{equation*}
(\nabla_Y \phi)SX + \phi (\nabla_Y S)X + (\nabla_Y S)\phi X + S(\nabla_Y \phi)X = (Yk)\phi X + k(\nabla_Y \phi)X.
\end{equation*}
Because of Equations~\eqref{eq:s5:phi-diff}, we obtain
\begin{equation}\label{5.5}
\begin{split}
&\eta(X)\{S^2 Y + \alpha SY - kSY\} - g(S^2 X + \alpha SX - kSX, Y)\xi \\
&+\phi(\nabla_Y S)X + (\nabla_Y S)\phi X = (Yk)\phi X .
\end{split}
\end{equation}
By substituting Equation \eqref{5.3} from the proof of Lemma~\ref{lemma 5.2} into Equation~\eqref{5.5}, and again using Equations~\eqref{eq:s5:phi-diff}, we obtain
\begin{equation}\label{5.6}
\begin{split}
&\eta(X)\Big[\alpha SY - \frac{\alpha k+2}{2}Y + \Big\{\eta(Y)(\alpha^2 - \frac{\alpha k}{2} +1)\\
&+\big[g(\phi Y, AN)\eta(A\xi)-\eta(AN)g(\phi Y, A\xi)\big]\Big\}\xi \\
&-\frac{1}{2}g(\phi Y, AN)(A\xi)^T + \frac{1}{2} g(\phi Y, A\xi)(AN)^T\\
&- \frac{1}{2}g(\phi Y, A\xi)(JA\xi)^T+\frac{1}{2}\big\{g(Y, A\xi) -\eta(Y)\eta(A\xi)\big\}(A\xi)^T\Big] \\
&-g(\alpha SX - \frac{\alpha k+2}{2}X, Y)\xi \\
&-g(\Big\{\eta(X)(\alpha^2 - \frac{\alpha k}{2} +1) + g(\phi X, AN)\eta(A\xi)\\
&- \eta(AN)g(\phi X, A\xi)\Big\}\xi, Y)\xi+\frac{1}{2}g(g(\phi X, AN)(A\xi)^T \\
&- g(\phi X, A\xi)(AN)^T + g(\phi X, A\xi)(JA\xi)^T \\
&- \big\{g(X, A\xi)-\eta(X)\eta(A\xi)\big\}(A\xi)^T, Y)\xi \\
&+\phi(\nabla_Y S)X + (\nabla_Y S)\phi X = (Yk)\phi X.
\end{split}
\end{equation}
Now we use the orthonormal frame field $(E_i)_{i=1,\dotsc,2m-1}$ on $TM$ to contract Equation (\ref{5.6}), giving
\begin{equation}\label{eq:s5:lem53-a}
\begin{split}
\sum_{i=1}^{2m-1}(E_{i}k){\phi}E_i &= \alpha S\xi - \frac{\alpha k+2}{2}\xi + ({\alpha}^2-\frac{{\alpha}k}{2}+1)\xi  \\
&\quad-\alpha \sum_{i=1}^{2m-1}g(SE_i, E_i)\xi + \frac{\alpha k+2}{2}\sum_{i=1}^{2m-1}g(E_i, E_i)\xi - (\alpha^2 - \frac{\alpha k}{2} +1)\xi  \\
&\quad+\frac{1}{2} \Big\{g(\phi (A\xi)^T, AN) -g(\phi(AN)^T, A\xi)\Big\}\xi  \\
&\quad+\frac{1}{2} g(\phi(JA\xi)^T, A\xi)\xi  \\
&\quad-\frac{1}{2} \Big\{g((A\xi)^T, A\xi) -\eta((A\xi)^T)\eta(A\xi)\Big\}\xi  \\
&\quad+\phi(\nabla_{E_i} S)E_i + (\nabla_{E_i}S)\phi E_i.
\end{split}
\end{equation}
On the other hand, we have
\begin{equation*}
(JA\xi)^T=\phi(A\xi)^T-g(A\xi, N)\xi
\end{equation*}
because of
\,$JA\xi = J((A\xi)^T + (A\xi)^N) = \phi (A\xi)^T + \eta((A\xi)^T)N - g(A\xi, N)\xi$\,. 
This gives the following formula
\begin{equation*}
\phi(JA\xi)^T = \phi^2(A\xi)^T=-(A\xi)^T+\eta((A\xi)^T)\xi.
\end{equation*}
Inserting this formula into Equation~\eqref{eq:s5:lem53-a} yields
\begin{equation}\label{5.7}
\begin{split}
\sum_{i=1}^{2m-1}(E_ik)\phi E_i &= \alpha S\xi - \frac{\alpha k-2}{2}\xi - \alpha\sum_{i=1}^{2m-1}g(SE_i, E_i)\xi \\
&\quad+ \frac{\alpha k-2}{2} \sum_{i=1}^{2m-1} g(E_i, E_i)\xi \\
&\quad -\frac{1}{2}\{g(\phi(A\xi)^T, AN) - g(\phi(AN)^T, A\xi)\}\xi \\
&\quad +\big \{g((A\xi)^T,A\xi) -\eta((A\xi)^T)\eta(A\xi) \big \}\xi \\
&\quad+\sum_{i=1}^{2m-1}\phi (\nabla_{E_i}S)E_i + \sum_{i=1}^{2m-1}(\nabla_{E_i}S)\phi E_i.
\end{split}
\end{equation}

As a consequence of the equation of Codazzi, we have 
\begin{equation*}
\sum_{i=1}^{2m-1} g(\phi(\nabla_{E_i}S)E_i, X) = -\sum_{i=1}^{2m-1} g((\nabla_{E_i} S)E_i, \phi X) = -\sum_{i=1}^{2m-1} g(E_i, (\nabla_{E_i}S)\phi X).
\end{equation*}

Moreover, by putting $X=E_i$, $Y=\phi X, Z=E_i$ into the equation of Codazzi, we obtain
\begin{equation*}
\begin{split}
&\quad \sum_{i=1}^{2m-1} g((\nabla_{E_i}S)\phi X - (\nabla_{\phi X}S)E_i, E_i)\\
&= -\sum_{i=1}^{2m-1} \{g(E_i, AN)g(A\phi X, E_i) - g(\phi X, AN) g(AE_i, E_i)\}\\
&\quad - \sum_{i=1}^{2m-1}\{g(E_i, A\xi)g(JA\phi X, E_i) - g(\phi X, A\xi)g(JAE_i, E_i)\}\\
&=-g(A\phi X, (AN)^T) + g(\phi X, AN)\{{\rm Tr} A- g(AN,N)\}\\
&\quad -g(JA\phi X, (A\xi)^T) + g(\phi X, A\xi)\sum_{i=1}^{2m-1} g(JAE_i, E_i),
\end{split}
\end{equation*}
where we have used the following formulas
\begin{equation*}
\begin{split}
& \sum_{i=1}^{2m-1} g(AE_i, E_i) = {\rm Tr} A - g(AN, N),\\
& \sum_{i=1}^{2m-1} g(AE_i, E_i) = -\sum_{i=1}^{2m-1} g(AE_i, JE_i) = -\sum_{i=1}^{2m-1} g(AE_i, \phi E_i + \eta(E_i)N)\\
& \hspace*{2.8cm} =-\sum_{i=1}^{2m-1} g(AE_i, \eta(E_i)N) = -g(A\xi, N),
\end{split}
\end{equation*}
and
\begin{equation*}
{\rm Tr} A=\sum_{i=1}^{2m-1} g(AE_i, E_i) + g(AN, N).
\end{equation*}
By plugging the preceding formulas into Equation~\eqref{5.7}, the proof of the lemma is completed. 
\end{proof}

\begin{lm}\label{lemma 5.4}
Let $M$ be a contact hypersurface in complex hyperbolic quadric $\HQ$, $m\geq 3$. 
Then we have for any vector field $X$ on $M$ and any section $A$ of $\mathfrak{A}$
\begin{equation*}
\begin{split}
\sum_{i=1}^{2m-1} g((\nabla_{E_i} S)\phi E_i, X) & = \sum_{i=1}^{2m-1} g((\nabla_X S)E_i, \phi E_i) + 2(m-1)\eta(X)\\
&\quad -g(AX, A\xi)-g(AX, \xi)g(N,AN)+g(\xi, AN)g(AX,N)\\
&\quad -g(AX, A\xi)+g(AX, N)g(N,A\xi)+\eta(A\xi)g(AX,\xi)\\
&\quad +g(X, A\xi)\big\{{\rm Tr} A-g(AN,N) - \eta(A\xi)\big\} \;,
\end{split}
\end{equation*}
where again $(E_i)_{i=1,\dotsc,2m-1}$ is an orthonormal frame field of $M$, we put $E_{2m}:= N$ and ${\rm Tr} A=\sum_{i=1}^{2m} g(AE_i, E_i)$.
\end{lm}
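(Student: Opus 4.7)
The plan is to apply the Codazzi equation from the proof of Lemma~\ref{lemma 4.2} with $(E_i, X, \phi E_i)$ in place of $(X, Y, Z)$ and then sum over the orthonormal frame $(E_i)_{i=1,\dotsc,2m-1}$. Since each $\nabla_{E_i} S$ is a symmetric endomorphism, $g((\nabla_{E_i} S) X, \phi E_i) = g((\nabla_{E_i} S) \phi E_i, X)$, so the summed left-hand side is precisely the difference $\sum_i g((\nabla_{E_i} S) \phi E_i, X) - \sum_i g((\nabla_X S) E_i, \phi E_i)$ appearing in the statement. The task thus reduces to evaluating the Codazzi right-hand side after summation.

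For the ``non-$\mathfrak{A}$'' part of the Codazzi right-hand side, the term $-\eta(E_i)\,g(\phi X,\phi E_i)$ contracts to $-g(\phi X,\phi\xi) = 0$, the term $2\eta(\phi E_i)\,g(\phi E_i, X)$ vanishes pointwise since $\phi E_i \in \ker\eta$, and the term $\eta(X)\,g(\phi E_i,\phi E_i)$ contracts via $\sum_i g(\phi E_i,\phi E_i) = \sum_i(g(E_i,E_i)-\eta(E_i)^2) = 2(m-1)$ to produce the leading $2(m-1)\eta(X)$ of the target formula.

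For the four $\mathfrak{A}$-dependent terms I would separate them into ``diagonal'' and ``off-diagonal'' contractions. The diagonal sums, using $\sum_i g(E_i, v)\,E_i = v^T$ for any $v\in T_p{Q^m}^*$, collapse to $-g(AX, \phi((AN)^T))$ and $-g(JAX, \phi((A\xi)^T))$. The key auxiliary step is the pair of identities
\[
\phi((AN)^T) = (A\xi)^T + g(AN,N)\,\xi, \qquad \phi((A\xi)^T) = -(AN)^T + g(\xi,AN)\,\xi,
\]
which follow from $\phi Y = JY - \eta(Y) N$ (for $Y\in TM$) together with $JAN = A\xi$ and $JA\xi = -AN$, both consequences of $JA = -AJ$ and $J\xi = N$. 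The normal components that would otherwise appear cancel because of the identity $g(AN,N) + \eta(A\xi) = 0$, which itself comes from $A\xi = JAN$ and $J\xi = N$. Once these identities are in hand, expanding the two inner products and grouping the terms (using repeatedly $g(A\xi,N)=g(\xi,AN)$, $g(JY,N)=\eta(Y)$, and $\eta(A\xi) = -g(AN,N)$) reproduces the first two lines of the target formula.

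For the two ``off-diagonal'' contractions $g(X,AN) \sum_i g(AE_i,\phi E_i)$ and $g(X,A\xi) \sum_i g(JAE_i,\phi E_i)$, I would use the splitting $\phi E_i = JE_i - \eta(E_i) N$ and recognise the resulting $i$-sums as partial traces, exactly as in the proof of Lemma~\ref{lemma 5.3}. The identity $\tr(JA) = 0$ (which holds for every conjugation $A$, since $JA$ is anti-symmetric on the $A$-eigenspace decomposition) together with $\sum_i \eta(E_i)\,g(AE_i, N) = g(\xi,AN)$ gives $\sum_i g(AE_i,\phi E_i) = 0$, so the $g(X,AN)$ contribution vanishes entirely and does not appear in the statement. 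The remaining sum evaluates via $\sum_i g(JAE_i, JE_i) = \sum_i g(AE_i, E_i) = \tr A - g(AN,N)$ and $g(JA\xi,N) = \eta(A\xi)$ to $\sum_i g(JAE_i,\phi E_i) = \tr A - g(AN,N) - \eta(A\xi)$, which is exactly the coefficient of $g(X,A\xi)$ on the third line. The principal obstacle is the clean derivation of the two $\phi((\cdot)^T)$ identities, with their delicate normal-component cancellation; once these are secured the remainder is systematic bookkeeping of tangential and normal projections of $AN$ and $A\xi$.
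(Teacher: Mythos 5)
Your proposal is correct and follows essentially the same route as the paper: contract the Codazzi equation over the frame with slots $(E_i, X, \phi E_i)$, use the symmetry of $\nabla_{E_i}S$ on the left-hand side, and evaluate the four $\mathfrak{A}$-dependent sums via $JAN=A\xi$, $JA\xi=-AN$ and the partial traces $\sum_i g(AE_i,\phi E_i)=0$ and $\sum_i g(JAE_i,\phi E_i)=\mathrm{Tr}\,A-g(AN,N)-\eta(A\xi)$. The only cosmetic difference is that you package the two ``diagonal'' sums into the identities for $\phi((AN)^T)$ and $\phi((A\xi)^T)$ --- both of which check out, though note the normal components cancel once by the symmetry of $A$ (i.e.\ $g(AN,\xi)=g(A\xi,N)$) and once by $g(AN,N)+\eta(A\xi)=0$ --- whereas the paper expands these sums term by term.
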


\begin{proof}
We apply the Codazzi equation once more, with $X=E_i, Y=X$ and $Z=\phi E_i$, obtaining
\begin{equation}\label{eq:s5:l54-proof-eq}
\begin{split}
&\quad \sum_{i=1}^{2m-1} \{g((\nabla_{E_i} S)\phi E_i, X) -g((\nabla_X S) E_i, \phi E_i)\} \\
&= \sum_{i=1}^{2m-1} g((\nabla_{E_i}S)X-(\nabla_X S)E_i, \phi E_i)\\
&= \sum_{i=1}^{2m-1} \{\eta(E_i)g(\phi X, \phi E_i) -\eta(X)g(\phi E_i, \phi E_i) -2\eta(\phi E_i)g(\phi E_i, X)\}\\
&\quad+\sum_{i=1}^{2m-1}g(E_i, AN)g(AX, \phi E_i) - \sum_{i=1}^{2m-1}g(X, AN)g(AE_i, \phi E_i)\\
&\quad+\sum_{i=1}^{2m-1}g(E_i, A\xi)g(JAX, \phi E_i) - \sum_{i=1}^{2m-1}g(X, A\xi)g(JAE_i, \phi E_i).
\end{split}
\end{equation}
Here, the terms on the right-hand side can be evaluated as follows:
\begin{equation*}
\begin{split}
&\quad -\sum_{i=1}^{2m-1} g(E_i, AN)g(AX, \phi E_i) = -\sum_{i=1}^{2m-1} g(E_i, AN) g(AX, JE_i - \eta(E_i)N)\\
&= -\sum_{i=1}^{2m-1} g(E_i, AN) g(AX,JE_i) + g(\xi, AN)g(AX, N)\\
&= -g(AX,JAN)+g(AX, JN)g(N,AN)+g(\xi, AN)g(AX, N)\\
&= -g(AX, A\xi)-g(AX, \xi)g(N,AN)+g(\xi, AN)g(AX, N),
\end{split}
\end{equation*}

\begin{equation*}
\begin{split}
&\quad \  -\sum_{i=1}^{2m-1} g(X, AN) g(AE_i, \phi E_i) = -\sum_{i=1}^{2m-1} g(X, AN) g(AE_i, JE_i - \eta(E_i)N)\\
&=-\sum_{i=1}^{2m} g(X, AN) g(AE_i, JE_i) + g(X, AN) g(AN, JN) \\
&\quad \ +\sum_{i=1}^{2m-1} g(X, AN) \eta(E_i) g(AE_{i}, N)\\
&=g(X, AN) g(AN, JN) + g(X, AN) g(A\xi, N)=0,
\end{split}
\end{equation*}

\begin{equation*}
\begin{split}
&\quad \sum_{i=1}^{2m-1} g(E_i, A\xi) g(JAX, \phi E_i) = \sum_{i=1}^{2m-1} g(E_i, A\xi) g(JAX, JE_i - \eta((E_i)N)\\
&=\sum_{i=1}^{2m-1} g(E_i, A\xi) g(JAX, JE_i) -\eta(A\xi)g(JAX, N)\\
&=g(JAX, JA\xi) - g(JAX, JN) g(N, A\xi) -\eta(A\xi)g(JAX, N)\\
&=g(AX, A\xi)-g(AX, N)g(N,A\xi)-\eta(A\xi)g(AX, \xi),\\
\end{split}
\end{equation*}

and
\begin{equation*}
\begin{split}
&\quad  \sum_{i=1}^{2m-1} g(JAE_i, \phi E_i) = -\sum_{i=1}^{2m-1} g(AE_i, J\phi E_i)\\
&=\sum_{i=1}^{2m-1} g(AE_i, \phi^2E_i + \eta(\phi E_i)N)\\
&=-\sum_{i=1}^{2m-1} g(AE_i, -E_i +\eta(E_i)\xi) = \sum_{i=1}^{2m-1} g(AE_i, E_i) -\sum_{i=1}^{2m-1}\eta(E_i)g(AE_i,\xi)\\
&= {\rm Tr}A - g(AN,N) -\eta(A\xi).
\end{split}
\end{equation*}

By applying these evaluations to Equation~\eqref{eq:s5:l54-proof-eq} we complete the proof of the Lemma.
\end{proof}

\begin{thm}\label{Theorem 5.5}
Let $M$ be a contact hypersurface in the complex hyperbolic quadric $\HQ$, $m \geq 3$. If the mean curvature of $M$ is constant, then the unit normal vector field $N$ of $M$ is  $\mathfrak{A}$-principal, that is, $AN=N$ for some section $A$ of the $S^1$-bundle $\mathfrak{A}$.
\end{thm}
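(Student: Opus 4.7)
The plan is to argue by contradiction: suppose $N$ is not $\mathfrak{A}$-principal at some $z_0\in M$. By continuity, non-$\mathfrak{A}$-principality persists on an open neighborhood $U$ of $z_0$, on which \cite[Proposition~3]{R} provides a smooth decomposition
\[ N = \cos(t)Z_1 + \sin(t)JZ_2 \]
with $t\in(0,\pi/4]$, a section $A\in\mathfrak{A}$, and orthonormal $Z_1,Z_2\in V(A)$. The aim is to force $\sin t\equiv 0$ on $U$ to reach the contradiction.

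Three preparatory observations drive the argument. First, since $H$ is constant and $k$ is already a non-zero constant for $m\geq 3$, $\alpha = H-(m-1)k$ is constant as well, so $\sum_i g((\nabla_Y S)E_i, E_i)=Y(H)=0$ for every tangent $Y$. Second, $\sum_i g((\nabla_X S)E_i, \phi E_i)$ is a full contraction of the symmetric operator $\nabla_X S$ against the skew-adjoint operator $\phi$, and therefore vanishes identically. Third, every $A\in\mathfrak{A}$ is a real structure whose $(+1)$- and $(-1)$-eigenspaces have equal dimension $m$, so $\mathrm{Tr}\,A = 0$. Substituting these three facts into Lemmas~\ref{lemma 5.3} and~\ref{lemma 5.4} erases all derivative-of-$S$ terms, collapsing those two identities into purely pointwise algebraic constraints among $S$, $\phi$, $A$, and the angle $t$.

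I would then translate the collapsed identities into equations in $t$ via the explicit formulas
\[ N = \cos(t)Z_1 + \sin(t)JZ_2, \quad AN = \cos(t)Z_1 - \sin(t)JZ_2, \quad \xi = \sin(t)Z_2 - \cos(t)JZ_1, \quad A\xi = \sin(t)Z_2 + \cos(t)JZ_1, \]
taken from the proof of Lemma~\ref{lemma 4.2}. These immediately yield $g(\xi,AN)=0$, $g(A\xi,N)=0$, $(A\xi)^T=A\xi$, $(AN)^T = AN - \cos(2t)\,N$ (so $|(AN)^T|^2=\sin^2(2t)$), and $g(\xi,A\xi)=-\cos(2t)$. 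I would then test the two collapsed identities on $X=\xi$, $X=A\xi$, and $X=(AN)^T$, and combine them with the quadratic formula for $S^2 X$ in Lemma~\ref{lemma 5.2} and the contact relation $S\phi+\phi S=k\phi$. The result should be an overdetermined polynomial system in $\sin t, \cos t$ whose coefficients depend only on the fixed constants $\alpha, k$; because $k\neq 0$, its only admissible root is $\sin(2t)=0$, i.e.\ $t=0$, contradicting $t\in(0,\pi/4]$.

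The most delicate step is the extraction of $\sin t=0$ from this system, since a degenerate branch in $\alpha k$ could in principle leave the identities compatible. To rule this out, I would additionally restrict to $X\in\mathcal{Q}$, where by Lemma~\ref{lemma 4.1} the vector $AX$ is tangent, so the tangential projections in Lemmas~\ref{lemma 5.2} and~\ref{lemma 5.3} simplify dramatically and produce cleaner constraints, enough to eliminate every possibility except $t=0$. Once $t\equiv 0$ is established on $U$, $N\in V(A)$ at every point, so $AN=N$, and the theorem is proved.
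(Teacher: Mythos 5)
Your preparatory reductions are sound and in fact coincide with what the paper does: constancy of $H$ makes $\alpha$ constant and kills $\sum_i g((\nabla_{\phi X}S)E_i,E_i)$, the contraction $\sum_i g((\nabla_XS)E_i,\phi E_i)$ vanishes because $\nabla_XS$ is symmetric and $\phi$ is skew, and $\mathrm{Tr}\,A=0$ since $V(A)$ and $JV(A)$ have equal dimension. But the final step of your plan cannot work as described. When you collapse Lemmas~\ref{lemma 5.3} and \ref{lemma 5.4} into Equation~\eqref{5.7} and test against tangent vectors, the entire pointwise algebraic content is the single identity $g(A\xi,\xi)\,g(AN,X)=0$ for all tangent $X$ (this is Equation~\eqref{eq:s5:thm55-eq} in the paper). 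In your notation $g(A\xi,\xi)=-\cos 2t$ and $|(AN)^T|=|\sin 2t|$, so the system forces only $\cos 2t\,\sin 2t=0$: the $\mathfrak{A}$-isotropic branch $t=\pi/4$ survives for every value of $\alpha$ and $k$, not just for a ``degenerate branch in $\alpha k$.'' Restricting the test vectors to $\mathcal{Q}$ does not help, because when $N$ is $\mathfrak{A}$-isotropic $A$ preserves $\mathcal{Q}$ and all the collapsed identities (which at such a point reduce to statements about $S^2$ on $\mathcal{C}$, i.e.\ Lemma~\ref{lemma 5.2}) are perfectly consistent there. So your claimed conclusion $\sin 2t\equiv 0$ does not follow, and since you assumed $t\in(0,\pi/4]$ on $U$, the case you actually have to refute is precisely $t\equiv\pi/4$ on an open set.

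The missing idea is a genuinely differential argument for that isotropic case, which is how the paper finishes (its Case~(3)). If $g(A\xi,\xi)\equiv 0$ near a point, one differentiates this relation along $M$, using $\bar{\nabla}^{\End}_XA=q(X)JA$, the Weingarten formula $\bar{\nabla}_X\xi=-\phi SX+\alpha\eta(X)N$, and the contact relation $S\phi+\phi S=k\phi$, to obtain $SA\xi=k\,A\xi$; then differentiating $g(AN,JN)=0$ yields $2k\,g(X,A\xi)=0$ for all tangent $X$, and taking $X=A\xi$ (a unit section of $\mathcal{C}$ in this situation) gives $k=0$, contradicting $k\neq 0$ for $m\geq 3$. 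Without some argument of this kind, which goes beyond the pointwise polynomial system in $\sin t,\cos t$ that you set up, the proof is incomplete; with it, your outline essentially reproduces the paper's proof.
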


\begin{proof}
We let the section $A$ of  $\mathfrak{A}$ be at first arbitrary. We let $(E_i)_{i=1,\dotsc,2m-1}$ be an orthonormal frame of $M$ as before, but now we suppose $E_{2m-1}=\xi$. We also put $E_{2m}=N$ again. 
By taking the inner product of Equation~\eqref{5.7} with a given tangent vector field $X$ on $M$ and using Equation~\eqref{eq:s5:H}, and Lemmas~\ref{lemma 5.3} and \ref{lemma 5.4}, we get
\begin{equation}\label{5.8}
\begin{split}
-(\phi X)k &=\sum_{i=1}^{2m-1}(E_i k)g(\phi E_i, X)\\
&=\alpha^2 \eta(X) - \frac{\alpha k -2}{2}\eta(X) - \alpha\big\{{\alpha}+(m-1)k\big\}\eta(X)\\
&\quad +\frac{{\alpha}k-2}{2}(2m-1)\eta(X)\\
&\quad -\frac{1}{2} \big \{g(\phi(A\xi)^T, AN) - g(\phi (AN)^T, A\xi) \big \} \eta(X)\\
&\quad +\big\{g((A\xi)^T, A\xi)-\eta((A\xi)^T)\eta(A\xi)\big\}\eta(X)\\
&\quad-\sum_{i=1}^{2m-1}g((\nabla_{\phi X}S)E_i, E_i) + g(A\phi X, (AN)^T)\\
&\quad -g(\phi X, AN)\big\{{\rm Tr} A-g(AN, N)\big\}\\
&\quad +g(JA\phi X, (A\xi)^T )+ g(\phi X, A\xi)g(A\xi, N)\\
&\quad+\sum_{i=1}^{2m-1} g((\nabla_X S)E_i, \phi E_i) + 2(m-1)\eta(X)\\
&\quad -g(AX, A\xi) - g(AX, \xi)g(AN, N) + g(\xi, AN)g(AX, N)\\
&\quad -g(AX, A\xi) + g(AX, N)g(N, A\xi) + \eta(A\xi)g(AX, \xi)\\
&\quad +g(X, A\xi)\big\{{\rm Tr} A - g(AN, N) - \eta(A\xi)\big\},
\end{split}
\end{equation}
where ${\rm Tr} A=\sum_{i=1}^{2m}g(AE_i, E_i)$  denotes the trace of the complex conjugation $A$ in $T_z {\HQ}$, $z \in {\HQ}$.

Because $M$ has constant mean curvature, we have
$$ \sum_{i=1}^{2m-1}g(({\nabla}_{\phi X}S)E_i,E_i)=0 \quad\text{and}\quad {\rm Tr}({\nabla}_XS){\phi}=\sum_{i=1}^{2m-1}g(({\nabla}_XS)E_i,{\phi}E_i)=0 \; , $$
and moreover the following formulas hold:
$$ \sum_{i=1}^{2m-2}g(Ae_i, e_i)=0 \quad\text{and}\quad g(A{\xi},N)=0 \; . $$
By replacing $X$ with $\phi X$ in Equation~\eqref{5.8} and applying the preceding formulas, we obtain
\begin{equation}\label{5.9}
\begin{split}
0&=g(A\phi^2 X, (AN)^T) - g(\phi^2 X, AN)g(A\xi,\xi) + g(JA\phi^2 X, (A\xi)^T)\\
&\quad + g(\phi^2 X, A\xi) g(A\xi, N)-g(A\phi X, \xi)g(AN, N) + \eta(A\xi)g(A\phi X, \xi).
\end{split}
\end{equation}
The terms on the right-hand side of (\ref{5.9}) can be calculated as follows:
\begin{equation*}
\begin{split}
&g(A\phi^2 X, (AN)^T) = g(A(-X+\eta(X)\xi), (AN)^T)\\
                      &\hspace*{3cm}= -g(AX,(AN)^T) + \eta(X)g(A\xi, (AN)^T)\\
                      &\hspace*{3cm}= -g(AX, AN-g(AN,N)N) + \eta(X) g(A\xi, AN-g(AN,N)N)\\
                      &\hspace*{3cm}=g(AN,N)g(AX,N),\\
&g(\phi^2 X, AN)  = g(-X+\eta(X)\xi, AN) = -g(X, AN),\\
&g(JA{\phi}^2X, (A\xi)^T) = g(JA\phi^2 X, A\xi - g(A\xi, N)N)\\
                      &\hspace*{2.9cm}=g(JA(-X+\eta(X)\xi), A\xi)\\
                      &\hspace*{2.9cm}=-g(JAX, A\xi) +\eta(X)g(JA\xi, A\xi)\\
                      &\hspace*{2.9cm}=-g(AX, AJ\xi)=-g(AX, AN)=0,\\
&g(A\phi X, \xi) =-g(A\phi X, JN) = -g(AJ \phi X, N) \\
               &\hspace*{1.8cm} =-g((A(\phi^{2}X + \eta(\phi X)N), N)\\
               &\hspace*{1.8cm} = g(AX, N) - \eta(X)g(A\xi, N)=g(AX,N).
\end{split}
\end{equation*}
Substituting these formulas into Equation~\eqref{5.9} gives
\begin{equation}\label{eq:s5:thm55-eq}
\begin{split}
0 & = g(AN,N)g(AX,N) + g(X,AN)g(A\xi,\xi)\\
  & \quad \ \  - g(AX,N)g(AN,N) + \eta(A\xi)g(AX,N)\\
  & = 2g(A\xi,\xi)g(AN,X)
\end{split}
\end{equation}
for any vector field $X$ on $M$.

We now fix $z\in M$ and separate three possible cases:
\begin{itemize}
\item[(1)] $g(A_z\xi_z,\xi_z)\neq 0$ for some $A_z \in \mathfrak{A}_z$.
\item[(2)] $g(A_z\xi_z,\xi_z)= 0$ for all $A_z \in \mathfrak{A}_z$, but there exists a sequence $(z_n)_{n\in \mathbb{N}}$ of points $z_n \in M$ converging to $z$, so that for every $n\in \mathbb{N}$
  there exists $A_{z_n} \in \mathfrak{A}_{z_n}$ with $g(A_{z_n}\xi_{z_n},\xi_{z_n})\neq 0$.
\item[(3)] $g(A_{z'}\xi_{z'},\xi_{z'})=0$ for all $z' \in M$ in some neighborhood of $z$, and all $A_{z'} \in \mathfrak{A}_{z'}$.
\end{itemize}

In case (1), Equation~\eqref{eq:s5:thm55-eq} gives $g(A_zN_z,X_z)=0$ for every $X_z \in T_zM$, hence $A_zN_z$ is another unit normal vector to $M$ at $z$. 
Because $M$ is a hypersurface in $\HQ$, it follows that either $A_zN_z=N_z$ and then $N_z \in V(A_z)$, or else $A_zN_z=-N_z$ and then $N_z \in V(-A_z)$ holds. In either case, $N_z$ is $\mathfrak{A}$-principal.

In case (2), $N_{z_n}$ is $\mathfrak{A}$-principal for every $n\in \mathbb{N}$ by case (1), and it follows that $N_z$ is also $\mathfrak{A}$-principal by continuity reasons.

We will now complete the proof of the theorem by showing that case (3) leads to a contradiction, and therefore cannot occur.
We fix any local section $A$ of $\mathfrak{A}$ near $z$, so that by the case hypothesis we have
\begin{equation}
\label{eq:s5:thm55-Axi}
g(A\xi,\xi)=g(JA\xi,\xi) = 0 \;;
\end{equation}
because of $N = J\xi$, these equations also imply
\begin{equation}
\label{eq:s5:thm55-AN}
g(AN,N) = g(JAN,N) = 0 \; .
\end{equation}
We also note $g(A\xi,N)=g(JAN,N)=0$, which shows that $A\xi$ is tangential to $M$. Moreover, $A\xi$ is orthogonal to $\xi$ by Equation~\eqref{eq:s5:thm55-Axi}, and therefore
$A\xi$ is in fact a section of $\mathcal{C}$. 

By differentiation of the equation $g(A\xi,\xi)=0$ in the direction of any (local) vector field $X$ of $M$, we obtain
\begin{align}
0 & = g((\bar{\nabla}^\End_X A)\xi,\xi) + g(A\bar{\nabla}_X \xi,\xi) + g(A\xi,\bar{\nabla}_X \xi) \notag \\
\label{eq:s5:thm55-iso-eq1}
& = g((\bar{\nabla}^\End_X A)\xi,\xi) + 2\,g(A\bar{\nabla}_X \xi,\xi) \; .
\end{align}
We again consider the 1-form $q(X)$ defined by Equation~\eqref{eq:lemma45:q}; by that equation and Equation~\eqref{eq:s5:thm55-Axi} we then have
\begin{equation}
\label{eq:s5:thm55-iso-eq1a}
g((\bar{\nabla}^\End_X A)\xi,\xi) = q(X) \cdot g(JA\xi,\xi) = 0 \; .
\end{equation}
Moreover, by the Gauss equation of first order we have (where $h$ denotes the second fundamental form of the immersion $M \hookrightarrow \HQ$)
\begin{align*}
\bar{\nabla}_X \xi & = \nabla_X \xi + h(X,\xi) = -\nabla_X (JN) + g(SX,\xi)\,N \\
& = -\phi\nabla_X N + g(\alpha\,X,\xi)\,N = -\phi SX + \alpha\,\eta(X)\,N \; .
\end{align*}
Therefore we have
\begin{align}
g(A\bar{\nabla}_X \xi,\xi) & = -g(A\phi SX,\xi) + \alpha \,\eta(X)\,g(AN,\xi) = -g(A\phi SX,\xi) = -g(\phi SX,A\xi) \notag \\
\label{eq:s5:thm55-iso-eq1b}  
& = g(S\phi X,A\xi) - k\cdot g(\phi X,A\xi) = g(\phi X, SA\xi - k\cdot A\xi)\; . 
\end{align}
where $g(AN,\xi)=g(N,A\xi)=0$ follows from the fact that $A\xi$ is tangential, and the second line follows from Equation~\eqref{eq:s5:phiS}. By plugging Equations~\eqref{eq:s5:thm55-iso-eq1a}
and \eqref{eq:s5:thm55-iso-eq1b} into Equation~\eqref{eq:s5:thm55-iso-eq1}, we obtain
$g(\phi X, SA\xi - k\cdot A\xi) = 0$,
which shows that the orthogonal projection of $SA\xi-k\cdot A\xi$ onto $\mathcal{C}$ vanishes. In fact, $A\xi$ is a section of $\mathcal{C}$, and because $S$ leaves $(\R \xi)^\perp = \mathcal{C}$
invariant, $SA\xi$ also is a section of $\mathcal{C}$. Hence we have
\begin{equation}
\label{eq:s5:thm55-iso-eq2}
SA\xi = k\cdot A\xi \; .
\end{equation}

We now differentiate the equation $g(AN,JN)=0$, which follows from Equation~\eqref{eq:s5:thm55-AN}, in the direction of $X$, and obtain by Equations~\eqref{eq:lemma45:q}, \eqref{eq:s5:thm55-AN}
and \eqref{eq:s5:thm55-iso-eq2}
\begin{align*}
0 & = g((\bar{\nabla}^\End_X A)N,JN) + g(A\bar{\nabla}_X N,JN) + g(AN,J\bar{\nabla}_XN) \\
& = q(X)\cdot g(JAN,JN) - g(ASX,JN) -g(AN,JSX) \\
& = 2 g(ASX,\xi) = 2 g(X,SA\xi) = 2k\cdot g(X,A\xi) \; .
\end{align*}
By choosing the tangent vector field $X=A\xi$ in the last equation, we obtain $k=0$, which is a contradiction.
\end{proof}
\par
\vskip 6pt

After these preparations we are ready to calculate the principal curvatures of a given contact hypersurface $M$ with constant mean curvature in the complex hyperbolic quadric ${Q^m}^*$.
This result will finally lead to the proof of the Main Theorem stated in the Introduction.

\par
\vskip 6pt
\begin{lm}\label{lemma 5.6}
Let $M$ be a contact hypersurface in the complex hyperbolic quadric $\HQ$, $m{\ge}3$ with constant mean curvature.
Then $M$ has three constant principal curvatures $\alpha$, $\lambda=0$ and $\mu$ with the following eigenspaces and multiplicities:
\begin{center}
\begin{tabular}{|l|l|l|}
\hline
\mbox{principal curvature} & \mbox{eigenspace}  & \mbox{multiplicity}\\
\hline
${\alpha}$ & ${\mathbb R}\xi$ & $1$ \\
${\lambda}=0$ & $J(V(A) \ominus \mathbb{R}N)$ & $m-1$ \\
${\mu}$ & $V(A) \ominus {\mathbb R}N$ & $m-1$\\
\hline
\end{tabular}
\end{center}
Here $A$ is the section of $\mathfrak{A}$ so that $AN=N$ holds. 
If we let $k$ be the constant from Equation~\eqref{eq:s5:phiS} and choose the sign of the unit normal field $N$ so that $k>0$, 
there exists $r> 0$ so that the values of the principal curvatures $\alpha$ and $\mu$ are as follows:
\begin{itemize}
\item[(i)] If $k<\sqrt{2}$, then $\alpha=\sqrt{2}\,\coth{\sqrt 2}r$, $\mu=\sqrt{2}\,\tanh{\sqrt 2}r$.
\item[(ii)] If $k=\sqrt{2}$, then $\alpha=\mu=\sqrt{2}$.
\item[(iii)] If $k>\sqrt{2}$, then $\alpha=\sqrt{2}\,\tanh{\sqrt 2}r$, $\mu=\sqrt{2}\,\coth{\sqrt 2}r$.
\end{itemize}
\end{lm}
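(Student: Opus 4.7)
The plan is to combine Theorem~\ref{Theorem 5.5} (which says $N$ is $\mathfrak{A}$-principal) with Lemma~\ref{lemma 4.5} (which applies precisely under $\mathfrak{A}$-principality) to pin down the eigenstructure of $S$ on $\mathcal{C}$, and then to plug the resulting picture into the simplification of Lemma~\ref{lemma 5.2} to obtain a single scalar relation between $\alpha$ and $k$. Explicitly, I fix the section $A$ of $\mathfrak{A}$ with $AN=N$; then $A\xi=-\xi$ and the tangent space of $M$ decomposes orthogonally as $\mathbb{R}\xi \oplus (V(A)\ominus\mathbb{R}N) \oplus J(V(A)\ominus\mathbb{R}N)$. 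Lemma~\ref{lemma 4.5}(ii) gives $ASX=SX$ for every $X\in\mathcal{C}$, and since $M$ is Hopf we also have $SX\in\mathcal{C}$ for such $X$; hence $S(\mathcal{C})\subset V(A)\ominus\mathbb{R}N$.

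Next I would exploit the contact condition \eqref{eq:s5:phiS}. For $Y\in J(V(A)\ominus\mathbb{R}N)\subset\mathcal{C}$ we have $\phi Y=JY\in V(A)\ominus\mathbb{R}N$, and the relation $S\phi Y+\phi SY=kJY$ decomposes into one component in $V(A)\ominus\mathbb{R}N$ and one in $J(V(A)\ominus\mathbb{R}N)$. The decomposition reads $SJY=kJY$ and $JSY=0$, yielding $SY=0$ and $SJY=kJY$. This identifies $J(V(A)\ominus\mathbb{R}N)$ as the $0$-eigenspace and $V(A)\ominus\mathbb{R}N$ as the $k$-eigenspace of $S$, each of multiplicity $m-1$. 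So already $\lambda=0$, $\mu=k$ and the remaining eigendirection is $\xi$ with eigenvalue $\alpha$.

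To relate $\alpha$ and $k$ I would feed the $\mathfrak{A}$-principal data into Lemma~\ref{lemma 5.2}. Using $AN=N$, $A\xi=-\xi$, $JA\xi=-N$, and the tangent/normal decompositions $(A\xi)^{T}=-\xi$, $(AN)^{T}=0$, $(JA\xi)^{T}=0$, together with $g(\phi X,AN)=g(\phi X,N)=0$ and $g(\phi X,A\xi)=-\eta(\phi X)=0$, every $A$-dependent term in the formula of Lemma~\ref{lemma 5.2} vanishes, and the statement collapses on $\mathcal{C}$ to $S^{2}X-kSX+\tfrac{\alpha k-2}{2}X=0$. Inserting either eigenvalue $\mu=k$ or $\lambda=0$ forces $\alpha k=2$, i.e.\ $\alpha=2/k$.

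Finally, after choosing the sign of $N$ so that $k>0$, I parametrize by an $r>0$ according to whether $k<\sqrt{2}$, $k=\sqrt{2}$, or $k>\sqrt{2}$. If $k=\sqrt{2}$ then $\alpha=\mu=\sqrt{2}$, giving case~(ii). If $k<\sqrt{2}$, set $k=\sqrt{2}\tanh(\sqrt{2}r)$; then $\alpha=2/k=\sqrt{2}\coth(\sqrt{2}r)$, giving case~(i). If $k>\sqrt{2}$, set $k=\sqrt{2}\coth(\sqrt{2}r)$; then $\alpha=\sqrt{2}\tanh(\sqrt{2}r)$, giving case~(iii). The only real obstacle is the bookkeeping in the second paragraph: one must verify that in the $\mathfrak{A}$-principal situation all the extrinsic $A$-terms in Lemma~\ref{lemma 5.2} genuinely cancel, because this is what reduces a formidable tensorial identity to the elementary scalar relation $\alpha k=2$ that governs the classification.
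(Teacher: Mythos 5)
Your proposal is correct, and it reaches the eigenstructure by a somewhat different mechanism than the paper. The paper first feeds an arbitrary principal direction $X\in\mathcal{C}$ with $SX=\sigma X$ into Lemma~\ref{lemma 4.2} together with \eqref{eq:s5:phiS} and obtains the quadratic $2\sigma^{2}-2k\sigma+\alpha k-2=0$ with constant coefficients; this gives constancy of the principal curvatures on $\mathcal{C}$ and at most two values $\lambda$ and $\mu=k-\lambda$, and only then does it invoke Lemma~\ref{lemma 4.5}(ii), to rule out that both are nonzero (otherwise all of the complex bundle $\mathcal{C}$ would lie in $V(A)$, contradicting $AJ=-JA$), so that $\lambda=0$, whence $\alpha k=2$ and $\mu=k$. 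You instead put Lemma~\ref{lemma 4.5}(ii) first, getting $S\mathcal{C}\subset V(A)\ominus\mathbb{R}N$, and then project the contact identity onto the orthogonal splitting $\mathcal{C}=(V(A)\ominus\mathbb{R}N)\oplus J(V(A)\ominus\mathbb{R}N)$ to read off $S=0$ on $J(V(A)\ominus\mathbb{R}N)$ and $S=k\,\mathrm{id}$ on $V(A)\ominus\mathbb{R}N$ directly; constancy is then automatic since $k$ is constant, and the single relation $\alpha=2/k$ comes from one substitution into your simplification of Lemma~\ref{lemma 5.2}, which on $\mathcal{C}$ is exactly the paper's quadratic. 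Your route is more economical, bypassing both the constancy argument and the contradiction with the complex structure; what the paper's route buys is that the quadratic constrains every principal curvature on $\mathcal{C}$ pointwise before any eigenbundle structure is known. Two small points you should make explicit: first, Lemma~\ref{lemma 5.2} is derived from Lemma~\ref{lemma 4.2}, whose proof uses a section $A$ adapted to $N$ (so that $g(\xi,AN)=0$); your choice $AN=N$ is precisely such an adapted section (the case $t=0$), so the application is legitimate. Second, your bookkeeping does check out: with $AN=N$ one has $A\xi=-\xi$ and $JA\xi=-N$, hence $(AN)^{T}=0$, $(JA\xi)^{T}=0$, $(A\xi)^{T}=-\xi$ and $g(X,A\xi)-\eta(X)\eta(A\xi)=0$, so on $\mathcal{C}$ the formula of Lemma~\ref{lemma 5.2} indeed collapses to $2S^{2}X=2kSX-(\alpha k-2)X$, and evaluating on the $0$-eigenbundle (nonempty since $m\geq 3$) yields $\alpha k=2$.
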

\par
\vskip 6pt

\begin{proof}
From Theorem~\ref{Theorem 5.5} we know that the unit normal $N$ is $\mathfrak{A}$-principal, that is, $AN=N$ for some section $A$ of the $S^1$-bundle $\mathfrak{A}$.
Moreover, because $M$ has constant mean curvature, it follows from Equation~\eqref{eq:s5:H} that   
the function $\alpha=g(S\xi,\xi)$ is constant. Note that $\alpha$ is a principal curvature of $M$, and that $\xi$ is the corresponding
principal curvature vector field.

Now let $X$ be a principal curvature vector field of $M$ that is orthogonal to $\xi$, i.e.~$X$ is a section of the vector subbundle $\mathcal{C}$ of $TM$, and let
$\sigma$ be the corresponding principal curvature function on $M$. 
Lemma~\ref{lemma 4.2} then gives
$${\alpha}({\phi}S+S{\phi})X=2S{\phi}SX + 2{\phi}X \; . $$
Because of $SX=\sigma X$ and Equation~\eqref{eq:s5:phiS}, this yields
\begin{equation}
\label{eq:s5:l56-eq1}
{\alpha}k{\phi}X=2{\sigma}S{\phi}X + 2{\phi}X \; .
\end{equation}
On the other hand, Lemma~\ref{lemma 4.2} also implies the formula
\begin{equation}
\label{eq:s5:l56-eq2}
(2{\sigma}-{\alpha})S{\phi}X=({\alpha}{\sigma}-2){\phi}X \; .
\end{equation}
Let $z\in M$ be given. If $2\sigma(z)\neq \alpha$, then Equation~\eqref{eq:s5:l56-eq2} implies $S\phi X_z = \tfrac{\alpha\,\sigma(z)-2}{2\sigma(z)-\alpha}\phi X_z$,
and therefore Equation~\eqref{eq:s5:l56-eq1} gives
$$ {\alpha}k{\phi}X_z=2\sigma(z)\, \frac{\alpha\,\sigma(z)-2}{2\sigma(z)-\alpha}\phi X_z + 2{\phi}X_z \; , $$
and hence because of $\phi X_z \neq 0$
$$ {\alpha}k =2\sigma(z)\, \frac{\alpha\,\sigma(z)-2}{2\sigma(z)-\alpha} + 2 \; , $$
which shows that $\sigma(z)$ is a solution of the quadratic equation
\begin{equation}
\label{eq:s5:l56-quadreq}
2\sigma^2 - 2k\sigma + \alpha k - 2 = 0 \; . 
\end{equation}
On the other hand, if $2\sigma(z)=\alpha$, then we have $(\alpha\,\sigma(z)-2)\phi X_z=0$ by Equation~\eqref{eq:s5:l56-eq2}, whence it follows that $\alpha\,\sigma(z)=2$ because
of $X_z\neq 0$. Therefore we have $\sigma(z)=\pm 1$ and $\alpha = 2\sigma(z)$, which implies that $\sigma(z)$ is a solution of the quadratic equation \eqref{eq:s5:l56-quadreq}
also in this case.

Thus we have shown that the principal curvature function $\sigma$ is a solution of the quadratic equation \eqref{eq:s5:l56-quadreq} with constant coefficients. Therefore
$\sigma$ is constant. It also follows that there exist at most two different such principal curvatures corresponding to principal curvature vectors in $\mathcal{C}$. 
If we denote these two principal curvatures by $\lambda$ and $\mu$, then $\mu = k-\lambda$ holds.

We will now show that one of the principal curvatures $\lambda$ and $\mu$ is zero.
In fact, we have $ASX = SX$ for any section $X$ of $\mathcal{C}$ by Lemma~\ref{lemma 4.5}(ii).
This shows that for any non-zero principal curvature of $M$, the corresponding principal curvature vector subbundle of \,$\mathcal{C}$ is contained in
$V(A)$. If both $\lambda$ and $\mu$ were non-zero, then in fact all of the complex vector bundle $\mathcal{C}$ would be contained in $V(A)$, which is a contradiction to $JA=-AJ$. Therefore
one of the principal curvatures must be zero, say $\lambda=0$. It then follows from Equation~\eqref{eq:s5:l56-quadreq} that $\alpha k -2 = 0$, hence $\alpha=\tfrac{2}{k}$,
and $\mu=k$ holds. It also follows that the principal curvature subbundle for the principal curvature $\mu\neq 0$ is $V(A) \cap \mathcal{C} = V(A) \ominus \R N$,
and that the principal curvature subbundle for the principal curvature $\lambda =0$ is the ortho-complement of the former subbundle in $\mathcal{C}$, and hence equal to
$J(V(A) \ominus \R N)$. 

Now suppose that the sign of the normal field $N$ is chosen so that $k>0$. If then
$k<\sqrt{2}$ holds, there exists $r>0$ with $k={\sqrt 2}\tanh{\sqrt 2}r=\mu$ and therefore
$\alpha=\tfrac{2}{k}={\sqrt 2}\coth{\sqrt 2}r$. If $k=\sqrt{2}$, we have
$\mu=k=\sqrt{2}$ and $\alpha=\tfrac{2}{k}=\sqrt{2}$.  If $k>\sqrt{2}$ holds,
there exists $r> 0$ with $k={\sqrt 2}\coth{\sqrt 2}r=\mu$
and therefore $\alpha=\tfrac{2}{k}={\sqrt 2}\tanh{\sqrt 2}r$.
\end{proof}
\medskip

We are now ready for the proof of the Main Theorem stated in the Introduction.

\medskip

\begin{proof}[Proof of the Main Theorem.]
Let $M$ be a connected orientable contact real hypersurface with constant mean curvature in the complex hyperbolic quadric ${Q^{m}}^*$ with the constant $k$ defined as in Equation~\eqref{eq:s5:phiS}.
The function $\alpha=g(S\xi,\xi)$ is constant by Equation~\eqref{eq:s5:H} and our assumption of constant mean curvature. Moreover, the unit normal $N$ is $\mathfrak{A}$-principal by Theorem~\ref{Theorem 5.5}.
We let $A$ be the section of $\mathfrak{A}$ so that $AN=N$ holds.   
We also choose the sign of the unit normal $N$ such that $k>0$. From Lemma~\ref{lemma 5.6} we then know that $M$ has exactly three principal curvatures $\alpha$, $\lambda$ and $\mu$,
they are constant and have the values given in Lemma~\ref{lemma 5.6} (depending on the value of $k$).
If we put $T_{\rho}=\{X\in{\mathcal C}{\vert}SX={\rho}X\}$ for $\rho \in \{\alpha,\lambda,\mu\}$, we have
$$ T_\alpha = \R \xi\;,\quad T_\lambda = J(V(A) \ominus \R N) \quad\text{and}\quad T_\mu = V(A) \ominus \R N \;, $$
in particular ${\mathcal C}=T_{\lambda}{\oplus}T_{\mu}$.
\par
\vskip 6pt
For given $p\in M$ and $r>0$, let us denote by ${\gamma}_p$ the geodesic in ${Q^{m}}^*$ with ${\gamma}_p(0)=p$ and ${\dot{\gamma}}_p(0)=N_p$, and by $F$ the smooth map
$$F: M \longrightarrow {{Q}^{m}}^*, \quad p \longmapsto {\gamma}_p(r).$$
\par
Geometrically, $F$ is the displacement of $M$ at distance $r$ in the direction of the normal vector field $N$. For each point $p \in M$ the differential $d_pF$ of $F$ at $p$ can be computed by using Jacobi vector fields. More specifically, we have
$$d_pF(X)=Z_X(r),$$
where $Z_X$ is the Jacobi vector field along ${\gamma}_p$ with initial values $Z_X(0)=X$ and $Z_X'(0)=SX$. Then it follows from the explicit description of the curvature tensor of ${Q^m}^*$ given in Section~\ref{section 2} that the normal Jacobi operator ${\bar R}_N$ is given by
\begin{equation*}
\begin{split}
{\bar R}_NZ=&{\bar R}(Z,N)N\\
 =&Z+AZ-2g(Z,N)N+2g(Z,JN)JN
\end{split}
\end{equation*}
for any vector field $Z$ on $M$. It follows that the normal Jacobi operator ${\bar R}_N$ has two eigenvalues $0$ and $-2$ with corresponding eigenspaces
$J(V(A){\ominus}{\mathbb{R}}N)=T_\lambda$ and $(V(A){\ominus}{\mathbb{R}}N){\oplus}{\mathbb{R}}JN=T_\mu\oplus T_\alpha$ respectively.
\par
\vskip 6pt
Therefore the Jacobi vector field $Z_X$ along $\gamma_p$ is given explicitly as follows:
\begin{equation*}
Z_X(r)=\begin{cases}
(\cosh({\sqrt 2}r) - \frac{\alpha}{\sqrt 2}\sinh({\sqrt 2}r))E_X(r) & \text{if $X \in T_{\alpha}=\mathbb{R}JN$,}\\
(\cosh({\sqrt 2}r) - \frac{\mu}{\sqrt 2}\sinh({\sqrt 2}r))E_X(r) & \text{if $X \in T_{\mu}$,}\\
E_X(r) & \text{if $X \in T_{\lambda}$,}
\end{cases}
\end{equation*}
where $E_X$ denotes the parallel vector field along ${\gamma}_p$ with $E_X(0)=X$. It follows from this explicit description that the dimension of the kernel of the linear map $T_pM \to T_{F(p)}{Q^m}^*,\,X \mapsto d_pF(X)=Z_X(r)$
does not depend on $p$. Hence $F$ has constant rank, and is therefore a submersion onto a submanifold $P$ of ${Q^m}^*$. The vector $\eta_p := \dot{\gamma}_p(r)$\, is a unit normal vector to $P$ at
the point $F(p)$, every unit normal vector of $P$ is obtained in this way, and the shape operator $S_{\eta_p}$ of $P$ with respect to the normal vector $\eta_p$ is given by
$$ S_{\eta_p} Z_X(r) = Z_X'(r) \; . $$
We also note that because the complex structure $J$ and the $S^1$-bundle of real structures $\mathfrak{A}$ of ${Q^m}^*$ are parallel, if $(\ker d_pF)^\perp$ has one of the
properties complex, $\mathfrak{A}$-invariant, totally real, $\mathfrak{A}$-principal, then $P$ will have the same property.

To complete the proof of the Main Theorem, we now distinguish three cases.

\emph{Case~1: $k < \sqrt{2}$.} By Lemma~\ref{lemma 5.6} we then have $\alpha=\sqrt{2}\,\coth{\sqrt 2}r$ and $\mu=\sqrt{2}\,\tanh{\sqrt 2}r$ for some $r>0$. This gives $Z_{JN}(r)=0$ and
$Z_{X}(r)\neq 0$ for all $X\in (T_\lambda\oplus T_\mu)\setminus \{0\}$. It follows that
$\text{ker}\ d_pF=\mathbb{R}JN$, hence $(\ker d_pF)^\perp=\mathcal{C}_p=\mathcal{Q}_p$ (see Lemma~\ref{lemma 4.1}(i)). Therefore the focal manifold $P$ is an \,$\mathfrak{A}$-invariant,
$(m-1)$-dimensional complex submanifold of ${Q^m}^*$. We now show that the shape operator $S_{\eta_p}$ vanishes. In fact we have for \,$X\in T_\lambda$\,
$$ S_{\eta_p}Z_X(r) = Z_X'(r) = 0 $$
and for $X \in T_\mu$
\begin{align*}
S_{{\eta}_p}Z_X(r)=Z_X'(r)
&=({\sqrt 2}\sinh{\sqrt 2}r - {\mu}\cosh{\sqrt 2}r)E_X(r)\\
&=({\sqrt 2}\sinh{\sqrt 2}r - {\sqrt 2}\tanh{\sqrt 2}r\cosh{\sqrt 2}r)E_X(r)
=0.
\end{align*}
Therefore $P$ is a totally geodesic submanifold of ${Q^m}^*$. By applying the classification theorem for totally geodesic submanifolds in the complex quadric $Q^m$ \cite{K}, \cite{K2} to its non-compact dual
${Q^m}^*$, we know that $P$ is locally congruent to the complex $(m-1)$-dimensional complex hyperbolic quadric described in Section~\ref{section 3}, and $M$ is a tube of radius $r$ around $P$.

\emph{Case~2: $k = \sqrt{2}$.} By Lemma~\ref{lemma 5.6} we then have $\alpha=\mu=\sqrt{2}$, and therefore for any $r>0$
$$ Z_X(r) = \begin{cases}
  e^{-\sqrt{2}\,r}\,E_X(r) & \text{if $X \in T_\alpha \oplus T_\mu$} \\
  E_X(r) & \text{if $X\in T_\lambda$}
\end{cases} \; . $$
Therefore $M$ does not have a focal set in this case. On the other hand, the Jacobi vector fields $Z_X(r)$ remain bounded for $r \to +\infty$. This means that all the normal geodesics
emanating from $M$ are asymptotic to each other, i.e.~they meet at a single point of the ``ideal boundary'' (set of points at infinity) of the non-compact manifold ${Q^m}^*$; this point
is given by an equivalence class of asymptotic geodesics whose tangent vectors are all $\mathfrak{A}$-principal.
Hence $M$ is a horosphere with this point as center at infinity.

\emph{Case~3: $k > \sqrt{2}$.} We then have $\alpha=\sqrt{2}\,\tanh{\sqrt 2}r$ and $\mu=\sqrt{2}\,\coth{\sqrt 2}r$ for some $r>0$ by Lemma~\ref{lemma 5.6}. This gives us
$Z_X(r)=0$ for all $X \in T_\mu$, and $Z_X(r)\neq 0$ for all $X \in (\mathbb{R}JN \oplus T_\lambda)\setminus \{0\}$. It follows that
$\ker d_pF = T_\mu$, hence $(\ker d_pF)^\perp = JV(A)$ is an $m$-dimensional, $\mathfrak{A}$-principal, totally real subspace of $T_p{Q^m}^*$. Therefore the focal manifold $P$
is an $m$-dimensional, $\mathfrak{A}$-principal, totally real submanifold of ${Q^m}^*$. An analogous calculation as in Case~1 shows that $P$ is totally geodesic.
By again applying the classification of totally geodesic submanifolds in $Q^m$ \cite{K}, \cite{K2} to the non-compact dual ${Q^m}^*$, we see that $P$ is locally congruent
to the totally real $m$-dimensional real hyperbolic space described in Section~\ref{section 3}, and $M$ is a tube of radius $r$ around $P$.
\end{proof}

\begin{acknowledgements}
{\rm This work was done while the first author was visiting professor at the Research Institute of Real and Complex Submanifolds in Kyungpook National University during October, 2017.}
\end{acknowledgements}

\end{document}